\numberwithin{equation}{section}
\newtheorem{conj}[equation]{Conjecture}
\newtheorem{thm}[equation]{Theorem}
\newtheorem{lem}[equation]{Lemma}
\newtheorem{cor}[equation]{Corollary}
\newtheorem{prop}[equation]{Proposition}
\theoremstyle{definition}
\newtheorem{dfn}[equation]{Definition}
\newtheorem{exam}[equation]{Example}
\theoremstyle{remark}
\newtheorem{rem}[equation]{Remark}
\renewcommand{\leq}{\leqslant}
\renewcommand{\geq}{\geqslant}
\newcommand{\abs}[1]{\left\lvert#1\right\rvert}
\newcommand{\norm}[1]{\left\|#1\right\|}
\newcommand{\MA}{\mathop{\mathrm{MA}}\nolimits}
\newcommand{\Aut}{\mathop{\mathrm{Aut}}\nolimits}
\newcommand{\PSH}{\mathop{\mathrm{PSH}}\nolimits}
\newcommand{\vol}{\mathop{\mathrm{vol}}\nolimits}
\begin{document}

\title[]{On the limit of spectral measures associated to a test configuration}
\author[]{Tomoyuki Hisamoto}
\address{Graduate School of Mathematical Sciences, The University of Tokyo, 
3-8-1 Komaba Meguro-ku, Tokyo 153-0041, Japan}
\email{hisamoto@ms.u-tokyo.ac.jp}
\subjclass[2000]{Primary~32A25, Secondary~32L10, 32W20}
\keywords{stability, constant scalar curvature K\"{a}hler metric, Monge--Amp\`{e}re operator}
\date{}
\maketitle

\begin{abstract}
We apply the result of \cite{His12} to the family of graded linear series constructed from any test configuration. This solves the conjecture raised by \cite{WN10} so that the sequence of spectral measures for the induced $\mathbb{C}^*$-action on the central fiber converges to the canonical Duistermatt--Heckman measure defined by the associated weak geodesic ray. As a consequence, we show that the algebraic $p$-norm of the test configuration equals to the $L^p$-norm of tangent vectors. Using this result, We may give a natural energy theoretic explanation for the lower bound estimate on the Calabi functional by \cite{Don05} and prove the analogous result for the K\"{a}hler--Einstein metric. 
\end{abstract}

\section{Introduction}\label{introduction} 
Let $X$ be an $n$-dimensional smooth projective variety and $L$ an ample line bundle over $X$. In the sequel we also fix a smooth Hermitian metric $h$ on $L$, which has strictly positive curvature over $X$. The curvature form defines a K\"{a}hler metric in the first Chern class $c_1(L)$. Conversely, any K\"{a}hler metric $\omega$ in $c_1(L)$ has a K\"{a}hler potential $\varphi$ in each local trivialization neighborhood such that the correction of $e^{-\varphi}$ defines a Hermitian metric with the curvature form $\omega=dd^c\varphi$, uniquely up to multiplication of a constant. We identify $h$ with the correction of weights $\varphi$ and with $\omega$. The space of K\"{a}hler metric $\mathcal{H}$ is the set of all $h=e^{-\varphi}$, endowed with the canonical Riemannian metric 
\begin{equation*}
\norm{u}_2:=\bigg(\int_X u^2 \frac{(dd^c\varphi)^n}{n!}\bigg)^{\frac{1}{2}}
\end{equation*} 
which is defined for any tangent vector $u$ at $\varphi$. 
There is the canonical K-energy functional $\mathcal{M}\colon \mathcal{H} \to \mathbb{R}$ such that any constant scalar curvature K\"{a}hler metric is characterized as a critical point of this energy. This K-energy is known to be convex along any smooth geodesic ray in $\mathcal{H}$ and it is important to investigate the gradient of the energy at infinity along a given geodesic ray $\varphi_t$ ($t\in[0, +\infty)$).  

A geodesic ray in $\mathcal{H}$ corresponds to a special kind of degeneration of $(X, L)$ in algebraic geometry. A flat family of polarized schemes $\pi: (\mathcal{X}, \mathcal{L}) \to \mathbb{C}$ with $(\mathcal{X}_1, \mathcal{L}_1)=(X, L)$ and an equivariant $\mathbb{C}^*$-action on $(\mathcal{X}, \mathcal{L})$ is called a test configuration. We denote the datum by $\mathcal{T}$. For each $k\geq1$ let $H^0(\mathcal{X}_0, \mathcal{L}_0^{\otimes k})=\bigoplus_{\lambda} V_{\lambda}$ be the eigenspace decomposition of the induced $\mathbb{C}^*$-action on the central fiber. Then we have the asymptotic expansion 
\begin{equation*}
\frac{\sum_{\lambda} \lambda \dim V_{\lambda}}{k \sum_{\lambda} \dim V_{\lambda}}
= F_0 + F_1 k^{-1} + O(k^{-2}). 
\end{equation*}
We call the coefficient $F_1$ in the subleading term as the Donaldson--Futaki invariant of $\mathcal{T}$. 
It was first established by \cite{PS07} that any test configuration $\mathcal{T}$ with fixed metric $\varphi$ canonically defines a {\em weak} geodesic ray $\varphi_t$ emanating from $\varphi$, in the space of K\"{a}hler metric. (Here for the proof of the main theorem we adopt the construction of \cite{RWN11} so that $\varphi_t -F_0$ gives the geodesic ray in \cite{PS07}.) In this situation it is now conjectured that the Donaldson--Futaki invariant corresponds to $\lim_{t\to\infty}\frac{d}{dt}\mathcal{M}(\varphi_t)$ if the latter one is properly defined for the weak geodesic ray. In this paper we further relate the asymptotic {\em distribution} of eigenvalues to $\varphi_t$ and give some application to the estimate for $F_1$. Our main theorem claims that the associated sequence of spectral measures converges to the canonical Duistermatt--Heckman measure defined by $\varphi_t$. The Monge--Amp\`{e}re (or Liouville) measure $\MA(\varphi_t)$ is defined for each singular $\varphi_t$ and equals to $(dd^c\varphi_t)^n$ if $\varphi_t$ is smooth (see subsection $2.1$). 
\begin{thm}\label{main2}
Let $\mathcal{T}$ be a test configuration with normal $\mathcal{X}$. 
Then the weak limit of the normalized distribution of eigenvalues is given by the push-forward of the Monge--Amp\`{e}re measure $\MA(\varphi_t)$ to the real line by the tangent vector $\dot{\varphi}_t$. That is, for any $t\geq0$ we have 
\begin{equation*}
\lim_{k\to \infty}\frac{n!}{k^n} \sum_{\lambda} \delta_{\frac{\lambda}{k}}\dim V_{\lambda} = (\dot{\varphi}_t)_*\MA(\varphi_t).  
\end{equation*}
Here $\delta_{\frac{\lambda}{k}}$ denotes the delta function for $\frac{\lambda}{k}\in\mathbb{R}$. 
In particular, the right hand side measure is independent not only of $t$ but also of $\varphi$, and defines the canonical Duistermatt--Heckman measure. 
\end{thm}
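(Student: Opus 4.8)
The plan is to translate the statement, through the test-configuration/filtration dictionary, into the asymptotic volume result of \cite{His12} for graded linear series, and then to recognize the equilibrium weights that result inside the weak geodesic ray by means of the orthogonality relation for envelopes.

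\emph{The algebraic side.} Normality of $\mathcal{X}$ lets one encode $\mathcal{T}$ as a linearly bounded $\mathbb{Z}$-filtration $\mathcal{F}=\mathcal{F}_{\mathcal{T}}$ of $R_\bullet=\bigoplus_{k\ge0}H^0(X,L^{\otimes k})$ with $\dim V_\lambda=\dim\mathcal{F}^{\lambda}R_k-\dim\mathcal{F}^{\lambda+1}R_k$ (up to the harmless affine-in-$k$ shift fixed by the normalization of weights). For $x\in\mathbb{R}$ set $\mathcal{F}^{(x)}_k:=\mathcal{F}^{\lceil kx\rceil}R_k$; multiplicativity of $\mathcal{F}$ makes $\mathcal{F}^{(x)}_\bullet$ a graded linear series of $L$, nonzero for $k\gg1$ when $x<\lambda_{\max}:=\sup\{x:\mathcal{F}^{(x)}_\bullet\neq0\}$, and we write $\vol(\mathcal{F}^{(x)}_\bullet):=\lim_k\tfrac{n!}{k^n}\dim\mathcal{F}^{(x)}_k$ (the limit exists for every $x$ by the theory of Okounkov bodies of filtered linear series). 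Denoting the measure on the left-hand side by $\mu_k$, one has the tautology $\mu_k([x,+\infty))=\tfrac{n!}{k^n}\dim\mathcal{F}^{\lceil kx\rceil}R_k$, which converges to $\vol(\mathcal{F}^{(x)}_\bullet)$; combined with $\mu_k(\mathbb{R})\to\int_X c_1(L)^n$ and uniformly compact supports this already yields weak-$*$ convergence $\mu_k\to\mu$ with distribution function $\mu([x,+\infty))=\vol(\mathcal{F}^{(x)}_\bullet)$. What remains is to identify $\mu$.

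\emph{Importing \cite{His12}.} For every continuous psh weight $\psi$ on $L$ and every $x<\lambda_{\max}$, apply the main result of \cite{His12} to $\mathcal{F}^{(x)}_\bullet$: it gives $\vol(\mathcal{F}^{(x)}_\bullet)=\int_X\MA(P[\psi]_{\mathcal{F}^{(x)}_\bullet})$, where $P[\psi]_{\mathcal{F}^{(x)}_\bullet}$ is the equilibrium weight — the largest psh weight $\leq\psi$ asymptotically dominated by the sections of $\mathcal{F}^{(x)}_\bullet$ — whose Monge--Amp\`ere measure moreover charges only the contact set $\{P[\psi]_{\mathcal{F}^{(x)}_\bullet}=\psi\}$ (orthogonality of envelopes). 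Verifying the hypotheses of \cite{His12} for $\mathcal{F}^{(x)}_\bullet$, and in particular controlling the degenerate boundary value $x=\lambda_{\max}$, which affects only a null set of levels, is again where normality of $\mathcal{X}$ enters. Now fix $t\geq0$; for $t>0$ the weight $\varphi_t$ is continuous and psh, and for $t=0$ it is the smooth $\varphi$. By the construction of \cite{RWN11}/\cite{PS07} (with the constant $F_0$-normalization recalled in the introduction, which moves neither $\MA(\varphi_t)$ nor $\dot\varphi_t$) the tail $(\varphi_s)_{s\ge t}$ is the weak geodesic ray attached to the \emph{same} filtration $\mathcal{F}$ but based at $\varphi_t$, so that $\varphi_s=\bigl(\sup_x(P[\varphi_t]_{\mathcal{F}^{(x)}_\bullet}+(s-t)x)\bigr)^{*}$ and the right derivative is the contact-level function $\dot\varphi_t(z)=\sup\{x:P[\varphi_t]_{\mathcal{F}^{(x)}_\bullet}(z)=\varphi_t(z)\}$; hence $\{\dot\varphi_t\ge x\}$ coincides, up to a null set, with the contact set of $P[\varphi_t]_{\mathcal{F}^{(x)}_\bullet}$. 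Orthogonality of envelopes together with \cite{His12} applied to $\psi=\varphi_t$ then gives, for a.e.\ $x$, $\bigl((\dot\varphi_t)_*\MA(\varphi_t)\bigr)([x,+\infty))=\MA(\varphi_t)\bigl(\{\dot\varphi_t\ge x\}\bigr)=\int_X\MA(P[\varphi_t]_{\mathcal{F}^{(x)}_\bullet})=\vol(\mathcal{F}^{(x)}_\bullet)$. Comparing with the previous paragraph, $\mu$ and $(\dot\varphi_t)_*\MA(\varphi_t)$ have equal distribution functions, hence coincide; and since $\mu$ depends only on $\mathcal{T}$, this push-forward is independent of $t$ and of $\varphi$ and is the canonical Duistermaat--Heckman measure.

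\emph{Main obstacle.} The crux is the last step: establishing that the tail of the \cite{RWN11} weak geodesic ray is the ray rebased at $\varphi_t$ with the stated Legendre-transform description, and that its right derivative is exactly the contact-level function $\dot\varphi_t$. This needs the regularity of the weak geodesic and a careful treatment of upper semicontinuous regularizations and of the points where the defining supremum is not attained, together with the identification modulo null sets of $\{\dot\varphi_t\ge x\}$ with the contact set (and, at $t=0$, the correct reading of the right derivative). A secondary difficulty, localized at $x$ near $\lambda_{\max}$, is to verify that \cite{His12} applies to the degenerating family $\mathcal{F}^{(x)}_\bullet$; this is handled through the normality of $\mathcal{X}$.
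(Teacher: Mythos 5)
Your overall architecture matches the paper's: convert the spectral measures into the distribution function $\lambda\mapsto\vol(W_\lambda)$, apply \cite{His12} (with the birationality check supplied by normality) to write $\vol(W_\lambda)=\int_X\MA(\psi_\lambda)$ for the equilibrium weights, and then match this with the sublevel sets of $\dot\varphi_t$. But there is a genuine gap at the crux, namely your equality $\MA(\varphi_t)\big(\{\dot\varphi_t\ge x\}\big)=\int_X\MA\big(P[\varphi_t]_{\mathcal{F}^{(x)}_\bullet}\big)$, which you derive from "orthogonality of envelopes." The orthogonality that is actually available here (the maximality property $\psi_\lambda=\varphi$ a.e.\ with respect to $\MA(\psi_\lambda)$, from \cite{RWN11}) only says that $\MA(\psi_\lambda)$ is carried by the contact set; it does \emph{not} say that on that set $\MA(\psi_\lambda)$ coincides with $\MA(\varphi)$. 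The strong form $\MA(\psi_\lambda)=\mathbf{1}_{\{\psi_\lambda=\varphi\}}\MA(\varphi)$ is known only when $\psi_\lambda$ has $C^{1,1}$ regularity (e.g.\ for the deformation-to-the-normal-cone configurations via \cite{Berm07}); the paper explicitly refuses to assume this and instead proves the two-sided estimate
\begin{equation*}
\int_{\{\dot{\varphi}_0 > \lambda\}} \MA(\varphi) \;\leq\; \int_X \MA(\psi_{\lambda}) \;\leq\; \int_{\{\dot{\varphi}_0 \geq \lambda\}} \MA(\varphi),
\end{equation*}
getting the lower bound from plurifine locality of the Monge--Amp\`ere product on the \emph{open} set $\{\dot\varphi_0>\lambda\}$ (openness comes from the $C^{1,\alpha}$ regularity of \cite{PS10}; the contact set itself is not plurifine open, so locality cannot be applied to it directly), and the upper bound from the maximality property combined with the mass-comparison theorem of \cite{BEGZ10} applied to $\max\{\psi_\lambda,\varphi-\varepsilon\}$, letting $\varepsilon\to0$. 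Without some substitute for this argument your chain of equalities does not close.

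A second, related problem is your treatment of general $t$. You apply \cite{His12} with the reference weight $\psi=\varphi_t$, but the theorem as used in the paper requires a \emph{smooth} Hermitian metric, and $\varphi_t$ for $t>0$ is only $C^{1,\alpha}$; moreover the "rebasing" claim that $(\varphi_s)_{s\ge t}$ is the ray of the same filtration emanating from $\varphi_t$, with $\dot\varphi_t$ equal to the contact-level function of the envelopes $P[\varphi_t]_{\mathcal{F}^{(x)}_\bullet}$, is asserted but not in \cite{RWN11} and would itself need proof. The paper avoids both issues by first showing that $(\dot\varphi_t)_*\MA(\varphi_t)$ is independent of $t$ (via Proposition 2.2 of \cite{Bern09}, using the $C^{1,\alpha}$ regularity) and then working exclusively at $t=0$, where the base weight is the fixed smooth $\varphi$. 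I would recommend restructuring your argument the same way.
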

Theorem \ref{main2} was first conjectured by \cite{WN10} and proved for product test configurations in the same paper. The analogous result for geodesic segments was obtained by \cite{Bern09} in a different approach. 
Recall that the above definition of $F_1$ was motivated by the equivariant Riemann--Roch formula of \cite{AB84}, which can be applied to the product test configuration and in that case one has the Duistermatt--Heckman measure on the central fiber in the usual way. In word of geodesic the central fiber corresponds to $t= \infty$ and our canonical Duistermatt--Heckman measure which is independent of $t$ gives the right generalization to any test configuration. Then Theorem \ref{main2} can be seen as a part of the {\em ideal} index theorem for an equivariant family which admits very singular fiber over the fixed point $0\in\mathbb{C}$. Taking the $p$-th moment of the above measure, we may extend the definition of algebraic norm in \cite{Don05} to any $p\geq 1$ and relate it to the $L^p$-norm of tangent vectors on the weak geodesic ray. 
\begin{thm}\label{main3}
Let us define the trace-free part of each eigenvalue $\lambda$ as 
\begin{equation*}
\bar{\lambda}:= \lambda- \frac{\sum \lambda\dim V_{\lambda}}{\sum \dim V_{\lambda}}
\end{equation*}
and for each $p\geq 1$ define the $p$-norm $\norm{\mathcal{T}}_p$ by 
\begin{equation*}
\norm{\mathcal{T}}_p := \bigg(\lim_{k\to \infty} \frac{1}{k^n}\sum_{\lambda} \abs{\frac{\bar{\lambda}}{k}}^p \dim V_{\lambda}\bigg)^{\frac{1}{p}}. 
\end{equation*}
Then the limit exists and 
\begin{equation*}
\norm{\mathcal{T}}_p=\bigg(\int_X \abs{\dot{\varphi}_t-F_0}^p\frac{\MA(\varphi_t)}{n!} \bigg)^{\frac{1}{p}}
\end{equation*}
holds. 
\end{thm}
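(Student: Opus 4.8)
The plan is to deduce Theorem \ref{main3} from Theorem \ref{main2} by upgrading the weak convergence of spectral measures to convergence of $p$-th moments. Write $\mu_k := \frac{n!}{k^n}\sum_\lambda\delta_{\lambda/k}\dim V_\lambda$ for the normalized spectral measure and let $c_k := \frac{\sum_\lambda\lambda\dim V_\lambda}{k\sum_\lambda\dim V_\lambda}$ be its barycenter, so that the asymptotic expansion recalled in the introduction gives $c_k = F_0 + F_1 k^{-1} + O(k^{-2})$, and in particular $c_k\to F_0$. Since $\bar\lambda/k = \lambda/k - c_k$, the quantity whose limit we must control is exactly
\begin{equation*}
\frac{1}{k^n}\sum_\lambda\abs{\frac{\bar\lambda}{k}}^p\dim V_\lambda \;=\; \frac{1}{n!}\int_{\mathbb{R}}\abs{x-c_k}^p\,d\mu_k(x),
\end{equation*}
while Theorem \ref{main2} gives $\mu_k\rightharpoonup\mu:=(\dot\varphi_t)_*\MA(\varphi_t)$ weakly. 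So the goal is to show $\int_{\mathbb{R}}\abs{x-c_k}^p\,d\mu_k\to\int_{\mathbb{R}}\abs{x-F_0}^p\,d\mu$ and then rewrite the limit, via the change-of-variables formula for a push-forward, as $\int_X\abs{\dot\varphi_t-F_0}^p\,d\MA(\varphi_t)$; dividing by $n!$ yields the asserted identity.

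The one substantial point is a uniform integrability statement: I would first establish that there is a compact interval $I=[a,b]$, independent of $k$ — and of $t$, since the limit measure in Theorem \ref{main2} does not depend on $t$ — such that $\Supp\mu_k\subset I$ for every $k$; equivalently, that the $\mathbb{C}^*$-weights $\lambda$ on $H^0(\mathcal{X}_0,\mathcal{L}_0^{\otimes k})$ satisfy $\abs{\lambda}\le Ck$ with $C$ independent of $k$. This is a standard feature of test configurations: via the Rees construction a test configuration with normal total space corresponds to a linearly bounded $\mathbb{Z}$-filtration of the section ring $\bigoplus_k H^0(X,L^{\otimes k})$, and the weights occurring in degree $k$ are the jumps of this filtration there, which by linear boundedness lie between $e_- k$ and $e_+ k$ for fixed $e_\pm$. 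Since $\mu$ is a weak limit of measures supported in the closed set $I$, it too is supported in $I$; in particular $\dot\varphi_t$ takes values in $I$ for $\MA(\varphi_t)$-almost every point, so no regularity input on the weak geodesic ray beyond Theorem \ref{main2} is needed.

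Granting the common compact support $I$, the remainder is soft measure theory. Because $c_k\to F_0$ and the functions $x\mapsto\abs{x-c}^p$ restricted to $I$ depend equicontinuously on $c\in I$, one has $\abs{x-c_k}^p\to\abs{x-F_0}^p$ uniformly on $I$; together with $\sup_k\mu_k(\mathbb{R})<\infty$, this gives $\int\abs{x-c_k}^p\,d\mu_k-\int\abs{x-F_0}^p\,d\mu_k\to 0$. Fixing a cutoff $\chi\in C_c(\mathbb{R})$ with $\chi\equiv1$ on $I$, weak convergence then yields $\int\abs{x-F_0}^p\,d\mu_k=\int\chi(x)\abs{x-F_0}^p\,d\mu_k\to\int\chi(x)\abs{x-F_0}^p\,d\mu=\int\abs{x-F_0}^p\,d\mu$. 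Combining these shows that the limit defining $\norm{\mathcal{T}}_p$ exists and equals $\frac{1}{n!}\int_{\mathbb{R}}\abs{x-F_0}^p\,d\mu$, and the push-forward identity $\int_{\mathbb{R}}\abs{x-F_0}^p\,d\mu(x)=\int_X\abs{\dot\varphi_t(z)-F_0}^p\,d\MA(\varphi_t)(z)$ finishes the argument. I expect the only real obstacle to be securing the uniform weight bound, i.e.\ the common compact support of the spectral measures; once that is in place, Theorem \ref{main2} does the rest.
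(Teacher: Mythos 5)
Your proposal is correct and follows essentially the same route as the paper: Theorem \ref{main3} is deduced from Theorem \ref{main2} by taking $p$-th moments of the spectral measures, the convergence of moments being justified by the uniform bound $\abs{\lambda}\leq Ck$ on the weights (this is exactly Lemma \ref{linearly bounded}, quoted from \cite{PS07}, so you need not re-derive it via the Rees construction). The only cosmetic difference is that for integer exponents the paper first passes through the Lebesgue--Stieltjes integral $-\int\lambda^p\,d(\vol(W_\lambda))$ as in the proof of Theorem \ref{b_0}, whereas you work directly with weak convergence plus common compact support; your write-up makes explicit the uniform-integrability step and the handling of the shift $c_k\to F_0$ that the paper leaves implicit.
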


Using Theorem \ref{main3}, we may give an energy theoretic explanation for the \cite{Don05}'s lower bound estimate on the Calabi functional. In particular in the Fano case we obtain the following. Note that when $L=-K_X$ any metric $h=e^{-\varphi}$ can be identified with the positive measure which is described as $e^{-\varphi}\bigwedge_{i=1}^n \frac{\sqrt{-1}}{2} dz_i\wedge d\bar{z}_i$ in each local coordinate. A metric $e^{-\varphi}$ is called a K\"{a}hler--Einstein metric if it satisfies the identity of the measures: $(dd^c\varphi)^n=n!e^{-\varphi}$. 
\begin{thm}\label{main4}
Let $X$ be a Fano manifold and $\mathcal{T}$ a test configuration of $(X, -K_X)$, whose total space $\mathcal{X}$ is normal. Then for any smooth Hermitian metric $h=e^{-\varphi}$ on $-K_X$ and exponents $1\leq p, q\leq +\infty $ with $1/p+1/q=1$ we have 
\begin{equation*}
\norm{\frac{n!e^{-\varphi}}{(dd^c\varphi)^n}-1}_q \geq \frac{F_1}{\norm{\mathcal{T}}_p}.  
\end{equation*}
In other word, the difference from K\"{a}hler--Einstein metric is bounded from below by the Donaldson--Futaki invariant.  
\end{thm}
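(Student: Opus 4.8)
The plan is to combine Theorem \ref{main3} with the standard variational characterization of the Donaldson–Futaki invariant along the weak geodesic ray. First I would recall that, by the work relating $F_1$ to the derivative of the K-energy at infinity (the slope formula), one has
\begin{equation*}
F_1 = \lim_{t\to\infty}\frac{d}{dt}\mathcal{M}(\varphi_t),
\end{equation*}
and that in the Fano case $\mathcal{M}$ differs from Ding-type functionals whose derivative along $\varphi_t$ admits an explicit integral expression. Concretely, the derivative of the relevant energy along the ray can be written as
\begin{equation*}
\frac{d}{dt}\mathcal{M}(\varphi_t) = \int_X (\dot\varphi_t - F_0)\bigg(1 - \frac{n!\,e^{-\varphi_t}}{(dd^c\varphi_t)^n}\bigg)\frac{\MA(\varphi_t)}{n!},
\end{equation*}
the trace-free normalization $\dot\varphi_t - F_0$ appearing because the Monge–Ampère measure has total mass $\int_X c_1(L)^n/n!$ and the factor $1 - n!e^{-\varphi}/(dd^c\varphi)^n$ integrates against it with a compensating constant. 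The key point I would need to nail down is that this identity persists for the \emph{weak} geodesic ray and that the right-hand side is monotone (or at least that its limit as $t\to\infty$ dominates $F_1$), so that for any fixed smooth $\varphi$,
\begin{equation*}
F_1 \leq \int_X (\dot\varphi_t - F_0)\bigg(1 - \frac{n!\,e^{-\varphi_t}}{(dd^c\varphi_t)^n}\bigg)\frac{\MA(\varphi_t)}{n!}
\end{equation*}
— and then replace $\varphi_t$ by $\varphi$ on the right, which is the content of convexity of the energy along the ray emanating from $\varphi$ together with the cocycle property.

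Next I would apply Hölder's inequality to the integral on the right. With exponents $p,q$ conjugate, $1/p + 1/q = 1$, we get
\begin{equation*}
\int_X (\dot\varphi_t - F_0)\bigg(1 - \frac{n!\,e^{-\varphi}}{(dd^c\varphi)^n}\bigg)\frac{\MA(\varphi)}{n!}
\leq \bigg(\int_X \abs{\dot\varphi_t - F_0}^p\frac{\MA(\varphi)}{n!}\bigg)^{1/p}\bigg(\int_X \abs{1 - \frac{n!\,e^{-\varphi}}{(dd^c\varphi)^n}}^q\frac{\MA(\varphi)}{n!}\bigg)^{1/q}.
\end{equation*}
The second factor is exactly $\norm{\,n!e^{-\varphi}/(dd^c\varphi)^n - 1\,}_q$ by definition of the norm $\norm{\cdot}_q$ (which is taken with respect to the measure $\MA(\varphi)/n!$). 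The first factor is $\norm{\mathcal{T}}_p$ by Theorem \ref{main3}, since $\dot\varphi_t - F_0$ pushes $\MA(\varphi_t)/n!$ forward to the Duistermatt–Heckman measure whose $p$-th moment is $\norm{\mathcal{T}}_p^p$; here I use the $t$-independence asserted in Theorem \ref{main2} to evaluate at any convenient $t$, and in particular the integral $\int_X \abs{\dot\varphi_t - F_0}^p \MA(\varphi_t)/n!$ is independent of $t$ and of $\varphi$. Combining, $F_1 \leq \norm{\mathcal{T}}_p \cdot \norm{n!e^{-\varphi}/(dd^c\varphi)^n - 1}_q$, which rearranges to the claimed bound (the case $\norm{\mathcal{T}}_p = 0$ forcing $F_1 \leq 0$ being handled separately, or absorbed by allowing the right side to be $+\infty$).

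The main obstacle I anticipate is the first step: justifying the integral formula for $\frac{d}{dt}\mathcal{M}$ and the inequality $F_1 \leq \frac{d}{dt}\mathcal{M}(\varphi_t)$ along a \emph{weak} geodesic ray with a possibly singular central fiber, where $\varphi_t$ is merely a bounded $\omega$-psh function and $(dd^c\varphi_t)^n$ is a Monge–Ampère measure in the Bedford–Taylor sense. This requires either an approximation argument (regularizing the ray and passing to the limit, controlling the energy and the measures $n!e^{-\varphi_t}$ in the process) or invoking a pluripotential-theoretic version of the slope formula; the convexity of $\mathcal{M}$ along weak geodesics and the identification of its asymptotic slope with $F_1$ are the technical heart, and everything downstream is Hölder plus bookkeeping with Theorems \ref{main2} and \ref{main3}. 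A secondary point to check is that the trace-free normalization in $\norm{\mathcal{T}}_p$ (subtracting $\sum\lambda\dim V_\lambda/\sum\dim V_\lambda$, whose limit is $F_0$) matches the $-F_0$ shift in $\dot\varphi_t$ exactly, so that the two factors in Hölder's inequality are genuinely the named quantities.
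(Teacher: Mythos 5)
Your overall architecture --- a convexity-plus-slope inequality for an energy functional along the weak geodesic ray, followed by H\"{o}lder's inequality and the identification of the first factor with $\norm{\mathcal{T}}_p$ via Theorem \ref{main3} --- is exactly the paper's architecture, and the H\"{o}lder/bookkeeping half of your argument matches the paper's inequality (\ref{Holder}) verbatim. But the decisive step, which you yourself flag as ``the main obstacle,'' is left unresolved in your write-up, and it is precisely the point where the paper does something different. You anchor the key inequality on the K-energy $\mathcal{M}$, invoking its convexity along the weak geodesic and the identification of its asymptotic slope with $F_1$. The paper explicitly does \emph{not} have these facts at its disposal for the K-energy along a weak (merely bounded, $C^{1,\alpha}$) geodesic ray: it states the K-energy version (\ref{convexity}) only conditionally (``assuming these points'') and defers its proof to the forthcoming preprint \cite{BHWN12}. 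If Theorem \ref{main4} rested on that, it would not be proved in this paper.

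The paper's actual proof in the Fano case sidesteps this by replacing the K-energy with the \emph{Ding functional}, whose convexity along arbitrary weak geodesic rays is a theorem of \cite{Bern11} and whose gradient at infinity is related to $F_1$ by \cite{Berm12}. These two citations yield, with an appropriate normalization of $e^{-\varphi}$,
\begin{equation*}
\int_X (\dot{\varphi}_0-F_0)\Big(e^{-\varphi} - \frac{\MA(\varphi)}{n!}\Big) \geq F_1,
\end{equation*}
and from there your H\"{o}lder step (with the density written as $\frac{n!e^{-\varphi}}{(dd^c\varphi)^n}-1$ against the reference measure $\MA(\varphi)/n!$) together with Theorem \ref{main3} closes the argument exactly as you describe. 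So the fix is simply to commit to the Ding functional rather than $\mathcal{M}$ --- you mention ``Ding-type functionals'' in passing but your chain of inequalities runs through $\mathcal{M}$. Two smaller points to tidy: your sign conventions for the slope at infinity ($F_1$ versus $-F_1$; recall the paper's convention is that K-stability means $F_1<0$ and the K-energy slope at infinity is \emph{minus} $F_1$) should be made consistent before the H\"{o}lder step, and in your H\"{o}lder display you integrate $\dot\varphi_t$ against $\MA(\varphi)$, mixing $t$ and $0$; the $t$-independence from Theorem \ref{main2} lets you evaluate everything at $t=0$, which is what the paper does.
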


Let us briefly explain the outline of our proof of Theorem \ref{main2}. 
The proof is based on the analytic study for graded linear series, which was exploited in \cite{His12}. We apply it to \cite{WN10}'s family of graded subalgebras 
\begin{equation*}
W_{\lambda}=\bigoplus_{k=0}^{\infty} W_{\lambda, k} \subseteq \bigoplus_{k=0}^{\infty} H^0(X, L^{\otimes k}), 
\end{equation*}
which is constructed from $\mathcal{T}$ and each $\lambda \in \mathbb{R}$ as follows. For a given section $s\in H^0(X, L^{\otimes k})$, let us denote its unique invariant extension which is at least meromorphic over $\mathcal{X}$ by $\bar{s}$. We define $W_{\lambda, k}$ as the set of sections $s$ whose invariant extensions $\bar{s}$ have poles along the central fiber $\mathcal{X}_0=\{t=0\}$ at most $-\lceil \lambda k \rceil$ order. In other words, 
\begin{equation*}
W_{\lambda, k} := \bigg\{ \ s\in H^0(X, L^{\otimes k}) \ \bigg| \ t^{-\lceil \lambda k \rceil}\bar{s} \in H^0(\mathcal{X}, \mathcal{L}) \bigg\}. 
\end{equation*} 
Then it can be proved algebraically that the limit of spectral measures is given by the Lebesgue--Stieltjes measure of the volume function $\vol(W_{\lambda})$ in $\lambda$. The main theorem of \cite{His12} interpret each volume into the Monge--Amp\`{e}re measure of associated equilibrium metric $P_{W_{\lambda}}\varphi$. The Legendre transformation of this family of equilibrium metrics is nothing but the weak geodesic ray $\varphi_t$ so that we may complete the proof by the recent developed techniques of pluripotential theory. 
This kind of approach seems itself interesting and we hope it should be studied more in the future.

\section{Analytic description of the volume}\label{Analytic description of the volume}

\subsection{Monge--Amp\`{e}re operator}\label{Monge Ampere operator}

In this section, we briefly review the definition and basic properties of the Monge--Amp\`{e}re operator. 
Let $L$ be a holomorphic line bundle on a projective manifold $X$. We usually fix a family of local trivialization patches $U_{\alpha}$ which cover $X$. A singular Hermitian metric $h$ on $L$ is by definition a family of functions $h_{\alpha}=e^{-\varphi_{\alpha}}$ which are defined on corresponding $U_{\alpha}$ and satisfy the transition rule: $\varphi_{\beta}=\varphi_{\alpha}-\log \abs{g_{\alpha \beta}}^2$ on $U_{\alpha}\cap U_{\beta}$. Here $g_{\alpha \beta}$ are the transition functions of $L$ with respect to the indices $\alpha$ and $\beta$. {\em The weight functions} $\varphi_{\alpha}$ are assumed to be locally integrable. If $\varphi_{\alpha}$ are smooth, $\{e^{-\varphi_{\alpha}}\}_{\alpha}$ defines a smooth Hermitian metric on $L$. We usually denote the family $\{ \varphi_{\alpha} \}_{\alpha}$ by $\varphi$ and omit the indices of local trivializations. Notice that each $\varphi=\varphi_{\alpha}$ is only a local function and not globally defined, but the curvature current $\Theta_h=dd^c \varphi$ is globally defined and is semipositive if and only if each $\varphi$ is plurisubharmonic ({\em psh} for short). Here we denote by $d^c$ the real differential operator $\frac{\partial-\bar{\partial}}{4\pi\sqrt{-1}}$.  We call such a weight a {\em psh weight}. The most important example is those of the form $k^{-1}\log (\abs{s_1}^2+\cdots +\abs{s_N}^2)$, defined by some holomorphic sections $s_1, \cdots, s_N  \in H^0(X, L^{\otimes k})$. Here $\abs{s_i}$ ($1\leq i \leq N$) denotes the absolute value of the corresponding function of each $s_i$ on $U_{\alpha}$. We call such weights {\em algebraic singular}. More generally, a psh weight $\varphi$ is said to have a {\em small unbounded locus} if the pluripolar set $\varphi^{-1}(-\infty)$ is contained in some closed complete pluripolar subset $S \subset X$ ({\em e.g.} a proper algebraic subset). 
 
Let $n$ be the dimension of $X$. The Monge--Amp\`{e}re operator is defined by   
\begin{equation*}
   \varphi \mapsto  (dd^c \varphi)^n 
\end{equation*}
when $\varphi$ is smooth. On the other hand it does not make sense for general $\varphi$. 
The celebrated result of Bedford--Taylor \cite{BT76} tells us 
that the right hand side can be defined as a current 
if $\varphi$ is at least in the class $L^{\infty}\cap \PSH(U_{\alpha})$. 
Specifically, by induction on the exponent $q = 1,2,...,n$, it can be defined as: 
\begin{equation*}
   \int_{U_{\alpha}} (dd^c \varphi)^q \wedge \eta :=  
   \int_{U_{\alpha}} \varphi (dd^c \varphi)^{q-1} \wedge dd^c \eta   
\end{equation*}
for each test form $\eta$.  
Here $ \int$ denotes the canonical pairing of currents  and test forms. 
This is indeed well-defined and defines a closed positive current, 
because $ \varphi $ is a bounded Borel function 
and $ (dd^c \varphi)^{q-1} $ has measure coefficients by the induction hypothesis. Notice the fact that any closed positive current has measure coefficients. 

It is also necessary to consider unbounded psh weights. 
On the other hand, for our purpose, 
it is enough to deal with weights with small unbounded loci. 

\begin{dfn}
 Let $\varphi$ be a psh weight of a singular metric on $L$. If $\varphi$ has a small unbounded locus contained in an algebraic subset $S$, we define a positive measure $\MA(\varphi)$ on $X$ by 
 \begin{equation*}
 \MA(\varphi)  := 
 \text{ the zero extension of } ( dd^c \varphi)^n . 
 \end{equation*}
 Note that the coefficient of $( dd^c \varphi )^n$ is well-defined 
 as a measure on $X \setminus S$. 
\end{dfn}

$\MA(\varphi)$ in fact defines a closed positive current on $X$. 
In particular, it has a finite mass on $X$. 
For a proof, see \cite{BEGZ10}, Section 1. 

\begin{rem}
In \cite{BEGZ10}, the {\em non-pluripolar} Monge--Amp\`{e}re product was defined 
in fact for general psh weights on a compact K\"{a}hler manifold.
Note that this definition of the Monge--Amp\`{e}re operator makes 
the measure $\MA(\varphi)$ to have no mass on any pluripolar set.  
In other words, $\MA(\varphi)$ ignores the mass which comes from the singularities of $\psi$. 
For this reason, as a measure-valued function in $\varphi$, $\MA(\varphi)$ no longer has the continuous property. 
\end{rem}

We recall the fundamental fact established in \cite{BEGZ10} 
which states that 
the less singular psh weight has the larger Monge--Amp\`{e}re mass. 
Recall that given two psh weight $\varphi$ and $\varphi'$ on $L$, 
$\varphi$ is said to be less singular than $\varphi'$ 
if there exists a constant $C > 0$ such that $\varphi' \leq \varphi + C$ holds on $X$. 
We say that a psh weight is {\em minimal singular} 
if it is minimal with respect to this partial order. 
When $\varphi$ is less singular than $\varphi'$ and $\varphi'$ is less singular than $\varphi$, 
we say that the two functions have the equivalent singularities. 
This defines a equivalence relation. 

\begin{thm}[\cite{BEGZ10}, Theorem 1.16.]\label{comparison theorem} 
Let $\varphi$ and $\varphi'$ be psh weights 
with small unbounded loci such that
$\varphi$ is less singular than $\varphi'$. Then 
\begin{equation*}
\int_X \MA(\varphi')
\leq 
\int_X \MA(\varphi)
\end{equation*}
holds. 
\end{thm}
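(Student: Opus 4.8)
The plan is to reduce the statement to the Bedford--Taylor comparison principle, exploiting the description of the non-pluripolar Monge--Amp\`ere mass as a monotone limit of ordinary Monge--Amp\`ere masses of truncations.

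First I would normalize. Fix a smooth strictly positive weight $\varphi_0$ on $L$ (available since $L$ is ample), set $\theta:=dd^c\varphi_0$, and write $u:=\varphi-\varphi_0$, $u':=\varphi'-\varphi_0$; these are globally defined $\theta$-psh functions, locally bounded off a fixed proper algebraic subset $S$. The hypothesis gives $u'\leq u+C$, and since $\MA(\cdot)$ is unchanged by adding a constant to the weight we may replace $u'$ by $u'-C$ and assume $u'\leq u$. Recalling from \cite{BEGZ10}, Section~1 that, with $u^{(j)}:=\max(u,-j)$, the measures $\mathbf{1}_{\{u>-j\}}(\theta+dd^cu^{(j)})^n$ increase weakly to $\MA(\varphi)$ --- plurifine locality yields the monotonicity, Bedford--Taylor the well-definedness --- we obtain
\begin{equation*}
\int_X\MA(\varphi)=\int_X\theta^n-\lim_{j\to\infty}\int_{\{u\leq-j\}}(\theta+dd^cu^{(j)})^n ,
\end{equation*}
and likewise for $u'$. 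The theorem is therefore equivalent to the assertion that the mass draining into the sublevel sets is larger for the more singular weight.

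The core tool is the Bedford--Taylor comparison principle applied to the bounded $\theta$-psh functions ${u'}^{(j)}:=\max(u',-j)\leq u^{(j)}$. On the compact $X$ both carry the same total mass $\int_X\theta^n$, so the comparison principle does not change masses but relocates them: it forces $(\theta+dd^c{u'}^{(j)})^n$ to place at least as much weight as $(\theta+dd^cu^{(j)})^n$ on the plurifine-open set $\{{u'}^{(j)}<u^{(j)}\}$, while on the contact set the two measures agree by plurifine locality. To pass from this to the monotonicity of the drained mass one compares the measures on the moving sets $\{u\leq-j\}\subseteq\{u'\leq-j\}$: on $\{u<-j\}$ both truncations are the constant $-j$, so by plurifine locality both Monge--Amp\`ere measures equal $\theta^n$ there, and the matter comes down to controlling the mass of $(\theta+dd^cu^{(j)})^n$ carried by the level set $\{u=-j\}$ against the corresponding contribution of $u'$, then letting $j\to\infty$. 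An alternative is to argue locally: over a small chart $B$ one glues $u$ to $u'$ so that they agree near $\partial B$ and applies the comparison principle on the domain $B$, where --- unlike on compact $X$ --- the masses are not cohomologically rigid and a genuine inequality drops out; the local estimates are then assembled using that the loss of mass near $S$ is plurifine-local.

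The step I expect to be the main obstacle is precisely this bookkeeping. The truncation measures $(\theta+dd^cu^{(j)})^n$ genuinely vary with $j$, a single level set $\{u=-j\}$ may carry a substantial amount of Monge--Amp\`ere mass (so a naive bound by $\int_{\{u<-j\}}\theta^n$ is far too weak), and one must verify that the monotone limits faithfully record the mass that $\MA(\varphi)$ and $\MA(\varphi')$ discard along the unbounded loci. This is the delicate analysis of canonical approximants carried out in \cite{BEGZ10}, Section~1, which I would cite rather than reprove; the small-unbounded-locus hypothesis enters there, guaranteeing through the Bedford--Taylor theory on $X\setminus S$ that $\MA(\varphi)$ and $\MA(\varphi')$ are finite positive measures. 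The remaining ingredients --- translation invariance of $\MA$, the comparison principle, the contact-set identity for bounded psh functions, and finiteness of the non-pluripolar mass --- are classical.
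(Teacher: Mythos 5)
First note that the paper itself gives no proof of this statement: it is imported verbatim as Theorem 1.16 of \cite{BEGZ10}, so the only ``proof'' on record here is the citation. Your proposal, read closely, also ends by citing \cite{BEGZ10}, Section 1 for ``the delicate analysis of canonical approximants'', i.e.\ for exactly the step where the inequality would actually be produced; so as a self-contained argument it does not close, and the narrative you wrap around that citation contains a genuine error. The claim that on the contact set $\{{u'}^{(j)}=u^{(j)}\}$ the two Monge--Amp\`ere measures ``agree by plurifine locality'' is false: plurifine locality requires the two functions to coincide on a plurifine \emph{open} set, and two bounded psh functions that merely touch on a set can carry completely different mass there. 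For instance $f=\max(\log|z|,-5)\leq g=\max(\log|z|,0)$ agree exactly on $\{|z|\geq 1\}$, where $dd^cf$ has mass $0$ and $dd^cg$ has mass $1$. Worse, the set on which the Bedford--Taylor comparison principle does give information, namely $\{{u'}^{(j)}<u^{(j)}\}=\{u'<u\}\cap\{u>-j\}$, is disjoint from the deep sublevel set $\{u\leq-j\}$ (where both truncations equal the constant $-j$), and that is precisely the set whose ``drained mass'' your reduction needs to compare with the one over $\{u'\leq-j\}$. So the comparison principle never reaches the quantity $\int_{\{u\leq-j\}}(\theta+dd^cu^{(j)})^n$, and the per-$j$ inequality you require is essentially equivalent to the theorem itself.

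The argument that actually works (and is the one in \cite{BEGZ10}) is different in kind: set $u_k:=\max(u',u-k)$, observe that $u-u_k$ is a globally \emph{bounded} difference of quasi-psh functions, and integrate by parts on $X\setminus S$ --- this is where the small-unbounded-locus hypothesis and the finiteness of the masses enter --- to get $\int_X \langle(\theta+dd^cu_k)\wedge T\rangle=\int_X\langle(\theta+dd^cu)\wedge T\rangle$ for the relevant mixed products $T$. Plurifine locality on the genuinely plurifine-open set $\{u'>u-k\}$, where $u_k=u'$, together with monotone convergence as $k\to\infty$, then yields $\int_X\MA(\varphi')\leq\int_X\MA(\varphi)$. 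If you want to present a proof rather than reproduce the citation, this Stokes-type argument is the route to take; the truncation-plus-comparison-principle scheme you outline stalls exactly at the level-set bookkeeping you yourself flagged as the obstacle.
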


\subsection{Analytic representation of volume}\label{Analytic representation of volume}

Let $X$ be a $n$-dimensional smooth complex projective variety and $L$ a holomorphic line bundle on $X$. Graded linear series is by definition a graded $\mathbb{C}$-subalgebra of the section ring 
\begin{equation*}
W=\bigoplus_{k=0}^{\infty} W_k \subseteq \bigoplus_{k=0}^{\infty}H^0(X, L^{\otimes k}). 
\end{equation*} 
They appear in many geometric situations. In fact in the present paper we give an application of the analysis of such proper subalgebras to the problem of constant scalar curvature K\"{a}hler metric. The volume of graded linear series is the nonnegative real number which measures the size of the graded linear series as follows: 
\begin{equation*}
\vol(W):= \limsup_{k\to \infty} \frac{\dim W_k}{k^n/n!}. 
\end{equation*}
The main result of \cite{His12} gives an analytic description of the volume. The analytic counterpart of the volume is the following generalized equilibrium metric, which is originated from \cite{Berm07}. 
\begin{dfn}
Let $W$ be a graded linear series of a line bundle $L$. Fix a smooth Hermitian metric of $L$ and denote it by $h=e^{-\varphi}$, where $\varphi$ is the weight function defined on a fixed local trivialization neighborhood. We define the equilibrium weight associated to $W$ and $\varphi$ by 
\begin{equation*}
P_W\varphi:= {\sup}^* \bigg\{ \ \frac{1}{k}\log \abs{s}^2 \ \bigg| \ k \geq 1, \ s\in W_k \ \text{such that} \ \abs{s}^2e^{-k\varphi} \leq 1. \bigg\}. 
\end{equation*}
Here $*$ denotes taking the upper semicontinuous regularization of the function. 
The equilibrium weight $P_W\varphi$ on each local trivialization neighborhood patches together and define the singular Hermitian metric on $L$. We call it an equilibrium metric. 
\end{dfn}
As in the subsection $2.1$, we define the Monge--Amp\`{e}re measure $\MA(P_W\varphi)$ on $X$. 
\begin{thm}[The main theorem of \cite{His12}]\label{main1}
Let $W$ be a graded linear series of a line bundle $L$, such that the natural map $X\dashrightarrow \mathbb{P}W_k^*$ is birational to the image for any sufficiently large $k$. Then for any fixed smooth Hermitian metric $h=e^{-\varphi}$ we have 
\begin{equation*}
\vol(W) = \int_X \MA(P_W\varphi). 
\end{equation*}
\end{thm}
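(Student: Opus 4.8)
The plan is to prove Theorem~\ref{main1} by exhausting $W$ from inside by the finitely generated graded subalgebras it determines, checking the identity directly in the classical semiample case, and then passing to the limit by means of the monotonicity of Monge--Amp\`ere masses supplied by Theorem~\ref{comparison theorem}. For $p\geq 1$ let $W^{(p)}\subseteq W$ be the graded linear series generated by $W_p$, so that $W^{(p)}_{kp}$ is the image of $\mathrm{Sym}^k W_p\to H^0(X,L^{\otimes kp})$. Running $p$ over the cofinal sequence $p=m!$, the algebra structure $W_p\cdot W_p\subseteq W_{2p}$ makes both $\vol(W^{(p)})$ and the equilibrium weights $P_{W^{(p)}}\varphi$ increase with $p$; moreover the birationality hypothesis guarantees that the $\limsup$ defining $\vol(W)$ is a genuine limit and that the finitely generated sub-series approximate it, so that $\vol(W^{(p)})\nearrow\vol(W)$.

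First I would treat a single $W^{(p)}$. Pass to a modification $\pi\colon X'\to X$ on which the base ideal of $W_p$ becomes invertible, $\pi^*(pL)=M+E$ with $E\geq 0$ effective and $M$ base-point-free (generated by the transforms of $W_p$). Then sections in $W^{(p)}_{kp}$ are exactly the sections of $\mathcal{O}_{X'}(kM)$, so $\vol(W^{(p)})=p^{-n}\vol(M)=p^{-n}(M^n)$ since $M$ is nef. On the analytic side, $\pi^*P_{W^{(p)}}\varphi$ has the same singularities as $\tfrac1p\log\sum_i|\widetilde{s}_i|^2$ for a basis $\{s_i\}$ of $W_p$, i.e.\ it differs by a bounded function from the sum of $\tfrac1p$ times the pullback under the morphism $\Phi_{|M|}\colon X'\to\mathbb{P}W_p^\ast$ of the standard Fubini--Study weight and the (algebraic, hence small-unbounded-locus) weight cutting out $\tfrac1p E$. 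Hence $P_{W^{(p)}}\varphi$ has small unbounded locus, $\MA(P_{W^{(p)}}\varphi)$ lives on the complement of $\pi(E)$ where all these weights are locally bounded, and Bedford--Taylor theory gives $\int_X\MA(P_{W^{(p)}}\varphi)=p^{-n}\int_{X'}c_1(M)^n=p^{-n}(M^n)=\vol(W^{(p)})$.

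Next, since $W^{(p)}\subseteq W$ one has $P_{W^{(p)}}\varphi\leq P_W\varphi$, while conversely any $s\in W_k$ with $\abs{s}^2e^{-k\varphi}\leq 1$ lies in $W^{(k)}_k$, whence $\tfrac1k\log\abs{s}^2\leq P_{W^{(k)}}\varphi$; taking the supremum and the upper regularization shows $\big(\sup_p P_{W^{(p)}}\varphi\big)^*=P_W\varphi$ a.e., and the birationality hypothesis also shows $P_W\varphi$ is locally bounded off the (proper algebraic) stable base locus, so it has small unbounded locus. Theorem~\ref{comparison theorem}, applied to $P_{W^{(p)}}\varphi$ (more singular) and $P_W\varphi$ (less singular), then gives
\[
\vol(W^{(p)})=\int_X\MA(P_{W^{(p)}}\varphi)\leq\int_X\MA(P_W\varphi),
\]
and letting $p\to\infty$ yields the bound $\vol(W)\leq\int_X\MA(P_W\varphi)$. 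For the reverse inequality one must show that no mass is lost along the increasing sequence $P_{W^{(p)}}\varphi\nearrow P_W\varphi$: on a fixed Zariski-open set $U$ on which $P_W\varphi$ is locally bounded, $P_{W^{(p)}}\varphi$ is also locally bounded on $U$ for $p$ large (this is where the birationality hypothesis is used again, to force the base loci of $W_p$ eventually to lie in the stable base locus of $W$), so $\MA(P_{W^{(p)}}\varphi)|_U\to\MA(P_W\varphi)|_U$ weakly by Bedford--Taylor; since by Theorem~\ref{comparison theorem} the masses $\int_X\MA(P_{W^{(p)}}\varphi)$ increase and are bounded by $\int_X\MA(P_W\varphi)$, their limit is the full mass, and with the previous steps this gives $\vol(W)=\int_X\MA(P_W\varphi)$.

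I expect this last step --- continuity of the non-pluripolar Monge--Amp\`ere mass along the monotone approximation --- to be the main obstacle. The non-pluripolar operator is genuinely discontinuous along monotone sequences in general, since it charges no pluripolar set (as the Remark stresses), so the argument must quantitatively rule out escape of mass onto the stable base locus; the birationality hypothesis on $X\dashrightarrow\mathbb{P}W_k^*$ is precisely the input that excludes this pathology, and converting it into the uniform boundedness of the $P_{W^{(p)}}\varphi$ off a fixed algebraic subset is the technical heart of the argument.
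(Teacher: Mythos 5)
The paper gives no proof of Theorem \ref{main1}: it is quoted verbatim from \cite{His12}, and the only indication of its proof here is the Remark that follows it, which describes exactly the strategy you propose --- resolve the base ideal of $W_k$, identify $\int_X\MA(P_W\varphi)$ with the limit of the self-intersection numbers $k^{-n}(M_k^n)$ of $M_k=\mu_k^*L^{\otimes k}\otimes\mathcal{O}(-F_k)$, and match this against the Fujita-type approximation of $\vol(W)$. Your sketch is therefore essentially the intended argument; the two external inputs you lean on implicitly (the equilibrium-mass identity $\int_X\MA(P_{W^{(p)}}\varphi)=p^{-n}(M^n)$, which is Berman's equilibrium-measure theorem rather than bare Bedford--Taylor, and the Fujita approximation $\vol(W^{(p)})\nearrow\vol(W)$ for graded linear series, which is where the birationality hypothesis is really consumed) are the content of \cite{Berm09} and of \cite{LM08}, \cite{KK09} respectively, and the final mass-continuity step does close in the way you suspect, since weak Bedford--Taylor convergence on the complement of the stabilized base locus combines with the fact that $\MA(P_W\varphi)$ charges no algebraic set to give the missing lower bound.
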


Note that Theorem \ref{main1} is valid for general line bundle which is possibly not ample. We will apply this general formula to the special graded linear series associated to a test configuration of a polarized manifold. 

\begin{rem} 
With no change of the proof in \cite{His12}, Theorem \ref{main1} can also be proved under the assumption $W_k$ is birational for any sufficiently {\em divisible} $k$. For non complete linear series, the condition $\vol(W)>0$ does not imply that $X \dashrightarrow \mathbb{P}W_k^*$ is birational for sufficiently large $k$. For example, when $W$ is defined as the pull-back of $H^0(Y, \mathcal{O}(k))$ by a finite morphism $X \to Y \subseteq \mathbb{P}^N$, $\vol(W)>0$ holds but $W_k$ never defines birational map to the image for any $k$. For this reason, neither does Theorem \ref{main1} hold for general $W$ with $\vol(W)>0$. To be precise, taking a resolution $\mu_k$ of the base ideal of $W_k$ and denoting the fixed component of $\mu_k^*W_k$ by $F_k$, the right hand side in Theorem \ref{main1} is given by the limit of self-intersection number of line bundles $M_k:= \mu_k^*L^{\otimes k}\otimes\mathcal{O}(-F_k)$. 
\end{rem}

\section{Test configuration and associated family of graded linear series}\label{Test configuration and associated family of graded linear series}

In this section we explain the construction of the family of graded linear series $W_{\lambda}$ $(\lambda \in \mathbb{R})$ from fixed test configuration $(\mathcal{X}, \mathcal{L})$, following the recipe of Witt-Nystr\"{o}m's paper \cite{WN10}. First we introduce the notion of K-stability. 

\subsection{K-stability}\label{K-stability}

\begin{dfn}[The definition of test configuration by \cite{Don02}]\label{test}
Let $(X, L)$ be a polarized manifold. We call the following datum {\em a test configuration} ({\em resp}. semi test configuration) for $(X, L)$. 
\begin{itemize}
\item[$(1)$]
A flat family of schemes with relatively ample ({\em resp.} semiample and big) $\mathbb{Q}$-line bundle $\pi \colon (\mathcal{X}, \mathcal{L}) \to \mathbb{C}$ such that $(\mathcal{X}_1, \mathcal{L}_1) \simeq (X, L)$ holds. 
\item[$(2)$]
A $\mathbb{C}^*$-action on $(\mathcal{X}, \mathcal{L})$ which makes $\pi$ equivariant, with respect to the canonical action of $\mathbb{C}^*$ on the target space $\mathbb{C}$. 
\end{itemize}
\end{dfn}

\begin{rem}
As it was pointed out by \cite{LX11}, the above original definition by Donaldson should be a bit modified. For example, if one further assume $\mathcal{X}$ is normal, then the pathological example in \cite{LX11} can be removed. On the other hand, recent paper \cite{Sz11} proposed to consider the class of test configurations whose {\em norms} $\norm{\mathcal{T}}$ are non-zero and this condition seems more natural and appropriate for our viewpoint. In fact Theorem \ref{main3} gives one evidence. See also \cite{RWN11}. We assume, however, the normality of $\mathcal{X}$ in proving Theorem \ref{main2} and \ref{main3}. 
\end{rem}

By the flatness of $\pi$ Hilbert polynomials of $(\mathcal{X}_t, \mathcal{L}_t)$ are independent of $t \in \mathbb{C}$. The $\mathbb{C}^*$-equivariance yields an isomorphism $(\mathcal{X}_t, \mathcal{L}_t) \simeq (X, L)$ for any $t \in \mathbb{C}\setminus \{0\}$. Note that the central fiber $(\mathcal{X}_0, \mathcal{L}_0)$ can be very singular. It is even not normal in general. A test configuration is said to be {\em product} if $\mathcal{X} \simeq X \times \mathbb{C}$ and {\em trivial} if further the action of $\mathbb{C}^*$ on $X \times \mathbb{C}$ is trivial. 
A test configuration $(\mathcal{X}, \mathcal{L})$ induces the $\mathbb{C}^*$-action on $H^0(\mathcal{X}_0, \mathcal{L}_0^{\otimes k})$ for each $k \geq1$.  
This action $\rho: \mathbb{C^*} \to \Aut(H^0(\mathcal{X}_0, \mathcal{L}_0^{\otimes k}))$ decomposes the vector space as $H^0(\mathcal{X}_0, \mathcal{L}_0^{\otimes k})= \bigoplus_{\lambda} V_{\lambda}$ such that $\rho(\tau) v = \tau^{\lambda} v$ holds for any $v \in V_{\lambda}$ and $\tau \in \mathbb{C}^*$. 
By the equivariant Riemann--Roch Theorem, the total weight $w(k)=\sum_{\lambda} \lambda \dim V_{\lambda}$ is a polynomial of degree $n+1$. Let us denote the coefficients by
\begin{equation}
w(k) = b_0k^{n+1}+b_1k^n+O(k^{n-1}). 
\end{equation}
We also write the Hilbert polynomial of $(X, L)$ by 
\begin{equation*}
N_k:=\dim H^0(X, L^{\otimes k})=a_0k^n+a_1k^{n-1}+O(k^{n-2}). 
\end{equation*}
The Donaldson--Futaki invariant of given test configuration is defined to be the subleading term of 
\begin{equation}
\frac{w(k)}{kN_k}=F_0+F_1k^{-1}+O(k^{-2}). 
\end{equation}
In other word, 
\begin{equation}
F_1:= \frac{a_0b_1-a_1b_0}{a_0^2}. 
\end{equation}
\begin{dfn}
A polarization $(X, L)$ is said to be {\em K-stable} ({\em resp.\ K-semistable}) if $F_1 <0$ ({\em resp.} $c_1 \leq 0$) holds for any non-trivial test configuration. We say $(X, L)$ is {\em K-polystable} if it is K-semistable and $F_1=0$ holds only for product test configuration. 
\end{dfn}
This notion of K-stability was first introduced by \cite{Tia97}. The above algebraic definition was given by \cite{Don02}. Note that K-stability is unchanged if one replaces $L$ to $L^{\otimes k}$ since $F_1$ is so. The equivalence of certain GIT-stability and existence of special metric originates from Kobayashi--Hitchin correspondence for vector bundles. In the polarized manifolds case, we have the following conjecture. 

\begin{conj}$($Yau--Tian--Donaldson$)$
A polarized manifold $(X, L)$ admits a cscK metric if and only if it is K-polystable.  
\end{conj}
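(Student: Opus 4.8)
The final statement is the Yau--Tian--Donaldson conjecture, which in full generality is open; what follows is therefore a proposal for how one would attack each implication with the dictionary assembled above, together with an account of where it breaks down. The organizing idea is the correspondence, central to this paper, between test configurations and weak geodesic rays: the algebraic invariants $F_1$ and $\norm{\mathcal{T}}_p$ are to be read off the asymptotic behaviour of the $K$-energy $\mathcal{M}$ along the associated ray $\varphi_t$, so that the two directions become, respectively, a ``testing'' statement and an ``existence'' statement for $\mathcal{M}$.

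For the direction ``cscK $\Rightarrow$ K-polystable'' the plan is as follows. Given a non-trivial test configuration $\mathcal{T}$ with normal total space and a cscK potential $\varphi$, first form the weak geodesic ray $\varphi_t$ emanating from $\varphi$ as in \cite{PS07}, \cite{RWN11}. Since $\mathcal{M}$ is convex along $\varphi_t$, the slope $\frac{d}{dt}\mathcal{M}(\varphi_t)$ is non-decreasing; one then identifies $\lim_{t\to\infty}\frac{d}{dt}\mathcal{M}(\varphi_t)$ with $F_1(\mathcal{T})$ (the conjecture recalled in the Introduction, essentially a theorem within the present circle of ideas) and, $\varphi$ being a critical point, concludes $F_1\ge 0$, with equality forcing the ray to be trivial modulo automorphisms. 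This last implication is exactly where Theorem~\ref{main3} enters: vanishing of the asymptotic slope must be made to propagate to vanishing of $\norm{\mathcal{T}}_1=\int_X \abs{\dot\varphi_t-F_0}\,\MA(\varphi_t)/n!$, hence $\dot\varphi_t$ is $\MA(\varphi_t)$-a.e.\ constant, which for a normal $\mathcal{X}$ should mean that $\mathcal{T}$ is a product. The delicate points are the low regularity of $\varphi_t$ (only bounded, with small unbounded locus) and the identification of the asymptotic slope with the \emph{algebraic} $F_1$ rather than with something strictly smaller; both are meant to be handled by the pluripotential estimates of \cite{BEGZ10} and the equilibrium-metric description of Theorem~\ref{main1}, which is precisely what Theorems~\ref{main2} and~\ref{main3} package.

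For the converse ``K-polystable $\Rightarrow$ cscK'' there is no purely energy-theoretic route: one must \emph{produce} the metric. The strategy would be to run a continuity path — or, when $L=-K_X$, the K\"{a}hler--Ricci flow — and to show that K-polystability forces convergence. Concretely, one would try to prove that K-stability implies a coercivity (properness) estimate for $\mathcal{M}$ on $\mathcal{H}$ modulo $\Aut(X,L)$, bounding it below by a small multiple of the Aubin $J$-functional, and then extract a minimizer by the direct method and conclude it is cscK by ellipticity. Converting \emph{algebraic} stability into \emph{analytic} coercivity is where the invariants of this paper are designed to help: $\norm{\mathcal{T}}_p$ is the natural denominator against which $F_1$ should be bounded below, Theorem~\ref{main4} being the prototype of such a bound, and a uniform inequality $F_1\ge\delta\,\norm{\mathcal{T}}_1$ over all test configurations is the algebraic shadow of properness.

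The main obstacle is this last passage in full generality. There is no known way to go from $F_1\ge\delta\,\norm{\mathcal{T}}_1$ for all test configurations to properness of $\mathcal{M}$ over the whole metric space $\mathcal{H}$: the non-Archimedean / filtration-theoretic completion of the space of test configurations that would be needed to close the gap is only partially understood, and the flow and continuity methods that sidestep it are known to converge only under extra hypotheses, such as $X$ Fano or a priori geometric bounds along the path. Thus the present paper supplies several entries of the dictionary between $F_1$, $\norm{\mathcal{T}}_p$ and Riemannian quantities along geodesic rays — entries that are used in the established Fano case via Theorem~\ref{main4} — while the equivalence for general polarizations remains conjectural precisely because the existence direction resists this kind of argument.
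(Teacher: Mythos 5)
The statement you were asked about is not a theorem of the paper: it is stated as the Yau--Tian--Donaldson \emph{conjecture}, and the paper offers no proof of it. There is therefore no ``paper's own proof'' to compare your attempt against; the paper only records that one direction (existence of a cscK metric implies K-polystability) was established elsewhere, citing \cite{Don05}, \cite{Sto09}, \cite{Mab08}, \cite{Mab09}, and that the converse is open. You correctly recognize this, and your text is a survey of strategy rather than a proof, so it cannot be judged as a completed argument --- and indeed it is not one.

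Two substantive caveats on your sketch. First, for the direction ``cscK $\Rightarrow$ K-polystable'' you propose to identify $\lim_{t\to\infty}\frac{d}{dt}\mathcal{M}(\varphi_t)$ along the weak geodesic ray with $F_1(\mathcal{T})$ and call this ``essentially a theorem within the present circle of ideas.'' The paper explicitly states that this identification is itself \emph{conjectural} for weak geodesic rays (see the Introduction, and the discussion around (4.11)--(4.12), where the author says the gradient of the K-energy for singular $\varphi_t$ ``is not so clear'' and that the limit ``should be smaller just as much as the multiplicity of the central fiber'' than $-F_1$, deferring a proof to \cite{BHWN12}). The proofs actually cited in the paper for that direction do not run through geodesic convexity: Donaldson's argument is the finite-dimensional lower bound on the Calabi functional via balanced-type approximation, and Stoppa's and Mabuchi's arguments for polystability are of a different (algebro-geometric / blow-up) nature. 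So your route, while natural, is not the one the literature the paper relies on actually uses, and at the time of this paper it contains a genuine gap at exactly the step you flag. Second, your use of Theorem~\ref{main3} to pass from vanishing asymptotic slope to triviality of $\mathcal{T}$ would at best give that the ray is $\varphi+F_0t$ (Corollary~\ref{trivial}); concluding from this that $\mathcal{T}$ is a \emph{product} configuration is a further step the paper does not supply. Your assessment of the converse direction as open and requiring a passage from stability to coercivity is accurate and consistent with the paper's remarks on \cite{Sz11}'s strengthened stability (\ref{strong K-stability}).
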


One direction of the above conjecture was proved by \cite{Don05}, \cite{Sto09}, \cite{Mab08}, \cite{Mab09}. That is, the existence of cscK metric implies K-polystability of the polarized manifold. See also \cite{Berm12} for the detail study in the K\"{a}hler--Einstein case.

The stability of vector bundle is defined by {\em slope} of subbundles and 
to pursue the analogy to the vector bundle case, \cite{RT06} studied the special type of test configurations which are defined by subschemes of $X$, and introduced the slope of a subscheme. 

\begin{exam}\label{RT}
A pair of an ideal sheaf $\mathcal{J} \subseteq \mathcal{O}_X$ and $c \in \mathbb{Q}$ defines a test configuration as follows. We call such test configuration as {\em deformation to the normal cone} with respect to $(\mathcal{J}, c)$: Let $\mathcal{X}$ be the blow-up of $X \times \mathbb{C}$ along $\mathcal{J}$ and $P$ be the exceptional divisor. The action of $\mathbb{C}^*$ on $X \times \mathbb{C}$ fix $V(\mathcal{J})$ so that it induces actions on $\mathcal{X}$ and $P$. We denote the composition of the blow-down $\mathcal{X} \to X \times \mathbb{C}$ and the projection to $X$ by $p:\mathcal{X} \to X$. Let us define the $\mathbb{Q}$-line bundle $\mathcal{L}_c$ on $X$ by $\mathcal{L}_c:=p^*L \otimes \mathcal{O}(-cP)$. 
When $V=V(\mathcal{J})$ is smooth, $P$ is a compactification of the normal bundle $N_{V/X}$. This is why we call $(\mathcal{X}, \mathcal{L}_c)$ the deformation to the normal cone. Let us denote the blow-up along $\mathcal{J}$ by $\mu: X' \to X$ and the exceptional divisor by $E$. The Seshadri constant of $L$ along $\mathcal{J}$ is defined by  
\begin{equation*}
\varepsilon(L, \mathcal{J}):= \sup \bigg\{ \ c \  \bigg| \ \mu^*L\otimes \mathcal{O}(-cE) \text{ is ample} \bigg\}. 
\end{equation*}
Then we have the following lemma so that $(\mathcal{X}, \mathcal{L}_c)$ actually defines a test configuration for any sufficiently small $c$. 
\begin{lem}[\cite{RT06}, Lemma 4.1]
For any $0<c<\varepsilon(L, \mathcal{J})$, $\mathcal{L}_c$ is a $\pi$-ample $\mathbb{Q}$-line bundle. 
\end{lem}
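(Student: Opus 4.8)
Proof proposal for the Rolt–Thomas lemma ($\mathcal{L}_c$ is $\pi$-ample for $0<c<\varepsilon(L,\mathcal{J})$).

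The plan is to reduce $\pi$-ampleness of $\mathcal{L}_c$ on $\mathcal{X}\to\mathbb{C}$ to the ampleness of $\mu^*L\otimes\mathcal{O}(-cE)$ on the single blow-up $X'\to X$, exploiting the product structure of $X\times\mathbb{C}$ and the $\mathbb{C}^*$-equivariance. First I would observe that blowing up the ideal $\mathcal{J}\cdot\mathcal{O}_{X\times\mathbb{C}}$ (extended trivially in the $\mathbb{C}$-direction, i.e. $\mathrm{pr}_X^{-1}\mathcal{J}$) commutes with base change to fibers: over $t\neq 0$ the center is a copy of $V(\mathcal{J})$ and the fiber $\mathcal{X}_t$ is canonically $X'=\mathrm{Bl}_{\mathcal{J}}X$, while over $t=0$ the fiber is the deformation-to-the-normal-cone central fiber. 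More precisely, since $\mathcal{J}$ is pulled back from $X$, one has $\mathcal{X}=X'\times\mathbb{C}$ away from the central fiber, and in general $\mathcal{X}$ sits inside $X'\times\mathbb{C}$; the key point is that the exceptional divisor $P$ restricts on each fiber $\mathcal{X}_t$ ($t\neq 0$) to the exceptional divisor $E$ of $\mu$, and $\mathcal{L}_c|_{\mathcal{X}_t}\simeq\mu^*L\otimes\mathcal{O}(-cE)$ under this identification.

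Next I would use the standard criterion that a $\mathbb{Q}$-line bundle $\mathcal{L}_c$ on $\mathcal{X}$ is $\pi$-ample if and only if it is ample on every fiber of $\pi$, together with openness of the ample cone. By the previous paragraph, on the fibers $\mathcal{X}_t$ with $t\neq 0$ we get exactly $\mu^*L\otimes\mathcal{O}(-cE)$, which is ample precisely because $c<\varepsilon(L,\mathcal{J})$ by definition of the Seshadri constant. For the central fiber $\mathcal{X}_0$ one can either invoke the $\mathbb{C}^*$-action — which identifies a neighborhood of $\mathcal{X}_0$ with a neighborhood of a general fiber up to the flat degeneration, so that ampleness on the general fiber forces $\pi$-ampleness in a Zariski-open neighborhood of $0$, hence everywhere after possibly shrinking — or argue directly that $\mathcal{X}_0$ is the $\mathbb{C}^*$-fixed fiber and $\mathcal{L}_c|_{\mathcal{X}_0}$ is a flat limit of the ample bundles on nearby fibers, and then use that $\pi$-ampleness is an open condition on the base $\mathbb{C}$ that is also $\mathbb{C}^*$-invariant, hence either empty or all of $\mathbb{C}$; since it is nonempty (it holds over $\mathbb{C}\setminus\{0\}$) it holds over all of $\mathbb{C}$.

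I expect the main obstacle to be the careful bookkeeping of the identification of $P$ with the fiberwise exceptional divisors and checking that no correction term appears — in particular, that $\mathcal{O}_{\mathcal{X}}(-P)|_{\mathcal{X}_t}\simeq\mathcal{O}_{X'}(-E)$ with the right multiplicity and that flatness of $\pi$ is preserved (one needs $\mathcal{X}$, the blow-up of $X\times\mathbb{C}$ along the pulled-back ideal, to be flat over $\mathbb{C}$, which follows since it is a reduced irreducible scheme dominating $\mathbb{C}$). The Seshadri-constant input is then essentially a restatement: $c<\varepsilon(L,\mathcal{J})$ is by definition equivalent to ampleness of $\mu^*L\otimes\mathcal{O}(-cE)$ on $X'$, and relative ampleness of a $\mathbb{Q}$-line bundle over a curve is equivalent to fiberwise ampleness, so the lemma follows once the fiberwise identification is nailed down.
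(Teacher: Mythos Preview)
The paper does not give its own proof of this lemma; it is simply quoted from \cite{RT06} as background, so there is no argument in the paper to compare against. Your proposal is therefore an independent attempt, and it contains two genuine problems.

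First, you have misidentified the construction. In the deformation to the normal cone one blows up $X\times\mathbb{C}$ along $V(\mathcal{J})\times\{0\}$ (equivalently along the ideal $\mathcal{J}+(t)$), \emph{not} along the pullback $\mathrm{pr}_X^{-1}\mathcal{J}$. Consequently the exceptional divisor $P$ lies entirely over $t=0$, and for $t\neq 0$ the fiber $\mathcal{X}_t$ is $X$ itself, with $\mathcal{L}_c|_{\mathcal{X}_t}\simeq L$. So ampleness on the general fiber is immediate from the ampleness of $L$ and has nothing to do with the Seshadri constant; the hypothesis $c<\varepsilon(L,\mathcal{J})$ only enters in the analysis of the central fiber.

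Second, and more seriously, your openness-plus-$\mathbb{C}^*$-invariance trick does not close the argument at $t=0$. The $\mathbb{C}^*$-invariant open subsets of $\mathbb{C}$ are $\emptyset$, $\mathbb{C}^*$, and $\mathbb{C}$; knowing that the locus where $\mathcal{L}_c$ is fiberwise ample is open, $\mathbb{C}^*$-invariant, and contains $\mathbb{C}^*$ does \emph{not} force it to equal $\mathbb{C}$, since $\mathbb{C}^*$ itself satisfies all three conditions. Some honest work at the central fiber is unavoidable. In Ross--Thomas this is done by describing $\mathcal{X}_0$ explicitly (the proper transform of $X\times\{0\}$ glued to $P$) and checking ampleness of $\mathcal{L}_c|_{\mathcal{X}_0}$ directly; it is precisely on the component meeting $P$ that the condition $c<\varepsilon(L,\mathcal{J})$, i.e.\ ampleness of $\mu^*L\otimes\mathcal{O}(-cE)$, is used.
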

\end{exam}

The slop theory of \cite{RT06} was further developed by \cite{Oda09}. We will use it in the next subsection to compute the associated graded linear series of $\mathcal{T}$. Consider a flag of ideal sheaves $\mathcal{J}_0 \subseteq \mathcal{J}_1 \subseteq \cdots \subseteq \mathcal{J}_{N-1}\subseteq\mathcal{O}_X$ and fix $c\in \mathbb{Q}_{>0}$. Let us take the blow
up $\mathcal{X}$ of $X \times \mathbb{C}$ along the $\mathbb{C}^*$-invariant ideal sheaf 
\begin{equation*}
 \mathcal{J}:= \mathcal{J}_0 + t \mathcal{J}_1 + \cdot + t^{N-1}\mathcal{J}_{N-1} + (t^N) 
\end{equation*} 
and denote the exceptional divisor by $P$ and the projection map by $p: \mathcal{X} \to X$. Then $\mathcal{X}$ naturally admits a line bundle $\mathcal{L}:= p^*L\otimes\mathcal{O}(-cP)$. In his paper \cite{Oda09} Odaka derived intersection number formula of the Donaldson--Futaki invariant for this type of semi test configuration defined by flag ideals. The point is that any test configuration can be dominated by the above type of {\em semi} test configuration. 

\begin{prop}[Proposition $3.10$ of \cite{Oda09}]\label{flag}
For an arbitrary normal test configuration $\mathcal{T}$, there exist a flag of ideal sheaves $\mathcal{J}_0 \subseteq \mathcal{J}_1 \subseteq \cdots \subseteq \mathcal{J}_{N-1}\subseteq\mathcal{O}_X$ and $c\in\mathbb{Q}_{>0}$ such that $\mathcal{T}'=(\mathcal{X'}, \mathcal{L'})$ defined by the flag is a semi test configuration which dominates $\mathcal{T}$ by a morphism $f \colon \mathcal{X}' \to \mathcal{X}$ with $\mathcal{L}'=f^*\mathcal{L}$. Moreover, $F_1(\mathcal{T}')= F_1(\mathcal{T})$ holds. 
\end{prop}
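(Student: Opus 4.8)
The plan is to reduce an arbitrary normal test configuration $\mathcal{T}=(\mathcal{X},\mathcal{L})$ to the deformation-to-the-normal-cone picture of Example~\ref{RT}, but with a filtration of ideals rather than a single ideal, by a standard flattening/blow-up argument. First I would exploit the $\mathbb{C}^*$-equivariance of $\pi\colon\mathcal{X}\to\mathbb{C}$ to compactify: after trivializing over $\mathbb{C}^*\subset\mathbb{C}$ via the isomorphism $(\mathcal{X}_t,\mathcal{L}_t)\simeq(X,L)$, one obtains a birational map $\mathcal{X}\dashrightarrow X\times\mathbb{C}$ which is an isomorphism away from the central fiber. The graph of this map, normalized, is a normal variety $\mathcal{X}'$ equipped with two projections, one to $\mathcal{X}$ and one to $X\times\mathbb{C}$; by equivariant resolution/flattening (Raynaud--Gruson, applied $\mathbb{C}^*$-equivariantly) I may assume $\mathcal{X}'\to X\times\mathbb{C}$ is a blow-up along a $\mathbb{C}^*$-invariant ideal sheaf $\mathcal{J}\subseteq\mathcal{O}_{X\times\mathbb{C}}$ cosupported on the central fiber, and that $\mathcal{X}'\to\mathcal{X}$ is a morphism $f$.

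Next I would identify the shape of $\mathcal{J}$. Since $\mathcal{J}$ is $\mathbb{C}^*$-invariant and cosupported on $\{t=0\}$, decomposing into $t$-degrees forces $\mathcal{J}=\mathcal{J}_0+t\mathcal{J}_1+\cdots+t^{N-1}\mathcal{J}_{N-1}+(t^N)$ for some chain $\mathcal{J}_0\subseteq\mathcal{J}_1\subseteq\cdots\subseteq\mathcal{J}_{N-1}\subseteq\mathcal{O}_X$ (the inclusions come from invariance: multiplying a degree-$j$ generator by $t$ lands in degree $j+1$). So $\mathcal{X}'$ is exactly the flag blow-up of Example~\ref{RT}, with exceptional divisor $P$. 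For the polarization, one sets $\mathcal{L}':=f^*\mathcal{L}$; the content is that $\mathcal{L}'$ can also be written as $p^*L\otimes\mathcal{O}(-cP)$ for a suitable $c\in\mathbb{Q}_{>0}$, i.e. that $f^*\mathcal{L}$ and $p^*L$ differ by a multiple of the exceptional divisor. This follows because both $f^*\mathcal{L}$ and $p^*L$ restrict to $L$ on $\mathcal{X}'_t\simeq X$ for $t\neq0$, hence their difference is supported on the central fiber; after possibly enlarging $N$ and the chain (replacing $\mathcal{J}$ by a power, which does not change the blow-up) one arranges the exceptional divisor of $X\times\mathbb{C}$ to be irreducible of the form $P$, giving $\mathcal{L}'=p^*L\otimes\mathcal{O}(-cP)$; relative semiampleness and bigness of $\mathcal{L}'$ over $\mathbb{C}$ are inherited from relative ampleness of $\mathcal{L}$ via the pullback $f$, so $\mathcal{T}'$ is a semi test configuration.

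Finally, for the equality $F_1(\mathcal{T}')=F_1(\mathcal{T})$ I would argue that the Donaldson--Futaki invariant is a birational invariant in this situation: since $\mathcal{X}$ is normal and $\mathcal{L}'=f^*\mathcal{L}$ with $f$ birational and proper, the projection formula gives $f_*\mathcal{O}_{\mathcal{X}'}=\mathcal{O}_{\mathcal{X}}$ (Zariski's main theorem, $\mathcal{X}$ normal) and hence $H^0(\mathcal{X}'_0,(\mathcal{L}')^{\otimes k})$ and $H^0(\mathcal{X}_0,\mathcal{L}_0^{\otimes k})$ agree up to terms of order $O(k^{n-1})$ in dimension and $O(k^n)$ in total weight, with identical $\mathbb{C}^*$-weights in top order; equivalently, the intersection-number formula for $F_1$ (total space computations of $\mathcal{L}^{\cdot(n+1)}$ and $\mathcal{L}^{\cdot n}\cdot K_{\mathcal{X}/\mathbb{P}^1}$ on the natural $\mathbb{P}^1$-compactifications) is invariant under the pullback $f$ because $f_*(f^*\mathcal{L})^{\cdot j}=\mathcal{L}^{\cdot j}$ and $f_*K_{\mathcal{X}'/\mathbb{P}^1}=K_{\mathcal{X}/\mathbb{P}^1}$ for $\mathcal{X}$ normal. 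The main obstacle I anticipate is the polarization bookkeeping in the second paragraph: ensuring that after the equivariant flattening the exceptional data is genuinely of the single-divisor form $\mathcal{O}(-cP)$ with a \emph{rational} $c$, rather than a general exceptional $\mathbb{Q}$-divisor, may require passing to a further blow-up and carefully re-indexing the flag $\{\mathcal{J}_j\}$; all the rest is the standard equivariant resolution toolbox plus normality of $\mathcal{X}$.
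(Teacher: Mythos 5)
The paper does not actually prove this proposition: it is imported as a black box from Odaka's paper (Proposition 3.10 of \cite{Oda09}), so your attempt has to be measured against Odaka's argument. Your overall route is the right one and matches his: trivialize over $\mathbb{C}^*$ to get an equivariant birational map $\mathcal{X}\dashrightarrow X\times\mathbb{C}$, pass to the graph closure, observe that a $\mathbb{C}^*$-invariant ideal of $\mathcal{O}_X[t]$ cosupported on $\{t=0\}$ is automatically of the flag form $\mathcal{J}_0+t\mathcal{J}_1+\cdots+t^{N-1}\mathcal{J}_{N-1}+(t^N)$ with increasing $\mathcal{J}_j$, and use $f_*\mathcal{O}_{\mathcal{X}'}=\mathcal{O}_{\mathcal{X}}$ (normality of $\mathcal{X}$) to see that $H^0(\mathcal{X}',\mathcal{L}'^{\otimes k})\simeq H^0(\mathcal{X},\mathcal{L}^{\otimes k})$ as $\mathbb{C}^*$-modules --- in fact \emph{exactly}, not merely to top order, which is what you need, since $F_1$ is read off from the $k^n$ coefficient of $w(k)$ and "agreement up to $O(k^n)$ in total weight" would not suffice. (A minor misattribution: the tool in your first step is that a projective birational morphism onto a quasi-projective variety is the blow-up of some coherent ideal sheaf, not Raynaud--Gruson flattening.)

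The genuine gap is precisely the point you flagged, and your proposed repair (enlarging $N$, replacing $\mathcal{J}$ by a power, re-indexing) does not close it. If you first fix \emph{some} invariant ideal $\mathcal{J}$ whose blow-up happens to give $\mathcal{X}'$ and only afterwards compare line bundles, all you know is that $f^*\mathcal{L}\otimes p^*L^{-1}=\mathcal{O}_{\mathcal{X}'}(D)$ with $D$ supported on $\mathcal{X}'_0$; the central fiber generally has many components, and there is no reason for $D$ to be a rational multiple of the exceptional divisor $P$ of your chosen $\mathcal{J}$ (replacing $\mathcal{J}$ by a power rescales $P$ but does not change its ray, and $P$ is certainly not irreducible in general). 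The missing idea is to run the construction in the opposite direction: build the flag ideal \emph{out of the polarization}. Embedding $\mathcal{X}'$ as the graph in $(X\times\mathbb{C})\times_{\mathbb{C}}\mathcal{X}$ shows $f^*\mathcal{L}^{\otimes r}$ is relatively very ample over $X\times\mathbb{C}$ for divisible $r$; after subtracting a large multiple of the linearly trivial full central fiber so that $D\leq0$, set $\mathcal{J}:=p_*\bigl(f^*\mathcal{L}^{\otimes r}\otimes p^*L^{-r}\bigr)\subseteq\mathcal{O}_{X\times\mathbb{C}}$. This is an invariant ideal cosupported on $X\times\{0\}$, hence a flag ideal; relative global generation then gives $\mathcal{X}'=\mathrm{Bl}_{\mathcal{J}}(X\times\mathbb{C})$ with $\mathcal{J}\mathcal{O}_{\mathcal{X}'}=f^*\mathcal{L}^{\otimes r}\otimes p^*L^{-r}$, so that $\mathcal{L}'=p^*L\otimes\mathcal{O}(-cP)$ holds by construction with $c=1/r$. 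Without this step the assertion that $\mathcal{T}'$ is ``defined by the flag'' --- the whole content of the proposition --- is not established.
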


\subsection{The associated family of graded linear series}\label{The associated family of graded linear series}

Let us denote the $\mathbb{C}^*$-action on $(\mathcal{X}, \mathcal{L})$ by $\rho: \mathbb{C}^* \to \Aut( \mathcal{X}, \mathcal{L})$. For any $s\in H^0(X, L^{\otimes k})$, it naturally defines an invariant section $\bar{s} \in H^0(\mathcal{X}_{t\neq 0}, \mathcal{L}^{\otimes k})$ by $\bar{s}(\rho(\tau)x):=\rho(\tau)s(x)$\ $(\tau \in \mathbb{C}^*, x \in \mathcal{X}_{t\neq 0})$. 
\begin{lem}[\cite{WN10}, Lemma 6.1.]
Let $t$ be the parameter of underlying space $\mathbb{C}$. For any $\lambda \in \mathbb{Z}, t^{-\lambda}\bar{s}$ defines a meromorphic section of $\mathcal{L}^{\otimes k}$ over $\mathcal{X}$. 
\end{lem}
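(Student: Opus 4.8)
The plan is to show that for a given invariant section $\bar s$ of $\mathcal L^{\otimes k}$ over $\mathcal X_{t\neq 0}$, the twisted section $t^{-\lambda}\bar s$ extends meromorphically across the central fibre $\mathcal X_0$. First I would note that the question is local near $\mathcal X_0$ and that by the equivariant isomorphism $\mathcal X_{t\neq 0}\simeq X\times(\mathbb C\setminus\{0\})$ the section $\bar s$ corresponds under the $\mathbb C^*$-action precisely to the section $(x,t)\mapsto \rho(t)s(\rho(t)^{-1}x)$; in particular $\bar s$ is already holomorphic on the open dense set $\mathcal X\setminus\mathcal X_0$, so it defines a rational (in particular meromorphic) section of $\mathcal L^{\otimes k}$ on all of $\mathcal X$ once we check it does not have an essential singularity along $\mathcal X_0$.

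The key step is the following: because $\rho$ is an algebraic $\mathbb C^*$-action on the quasi-projective variety $(\mathcal X,\mathcal L)$ and $\pi$ is equivariant with the standard weight-one action on the base $\mathbb C$, the orbit map $\tau\mapsto\rho(\tau)\cdot(\text{local frame and section data})$ is a morphism from $\mathbb C^*$ that is algebraic in $\tau$; hence in any local trivialisation the coefficients of $\bar s$ are Laurent-type expressions in $t$, i.e. of the form $t^{-m}\cdot(\text{holomorphic})$ for some integer $m$ depending on $s$ and on the chart. This is where the algebraicity of the $\mathbb C^*$-action — as opposed to a merely holomorphic one — is essential, and it is the main obstacle to write cleanly: one has to produce a uniform bound on the pole order of $\bar s$ along $\mathcal X_0$, which follows from the fact that $\mathcal X$ is a finite-type scheme, $\mathcal L$ is a line bundle on it, and the $\mathbb C^*$-linearisation is given by finitely many polynomial data, so the weight filtration on sections is bounded.

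Granting that $\bar s$ has a pole of some finite order $m$ along $\mathcal X_0$ (and $\mathcal X_0=\{t=0\}$ as a Cartier divisor by flatness of $\pi$ and $\mathbb C$ being a smooth curve), the section $t^{m}\bar s$ extends to a genuine holomorphic section of $\mathcal L^{\otimes k}$ over $\mathcal X$. Then for any $\lambda\in\mathbb Z$ we have $t^{-\lambda}\bar s = t^{-\lambda-m}\cdot(t^{m}\bar s)$, which is a holomorphic section multiplied by an integer power of the regular function $t$; such an object is by definition a meromorphic section of $\mathcal L^{\otimes k}$ over $\mathcal X$. This completes the argument. I expect the only delicate point is the finiteness of the pole order, which I would either cite from the standard theory of $\mathbb C^*$-actions on schemes (e.g. via the description of $\mathcal X$ near $\mathcal X_0$ as $\mathrm{Proj}$ of a finitely generated Rees-type algebra, cf.\ the flag-ideal picture of Proposition~\ref{flag}) or prove directly by choosing an equivariant affine cover and using that sections of a coherent sheaf on an affine $\mathbb C^*$-scheme have weights bounded below.
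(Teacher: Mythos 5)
Your argument is essentially correct, and it follows the standard route; note that the paper itself gives no proof of this lemma, deferring entirely to \cite{WN10}, Lemma 6.1, so there is no internal argument to compare against. The step you rightly flag as the only delicate one --- finiteness of the pole order of $\bar{s}$ along $\mathcal{X}_0$ --- can be closed more cleanly than by bounding weights: since the $\mathbb{C}^*$-action is algebraic, the equivariant isomorphism $X\times\mathbb{C}^*\simeq\mathcal{X}\setminus\mathcal{X}_0$ exhibits $\bar{s}$ as an \emph{algebraic} section of $\mathcal{L}^{\otimes k}$ over $\mathcal{X}\setminus\mathcal{X}_0$, and $\mathcal{X}\setminus\mathcal{X}_0$ is precisely the non-vanishing locus of the global function $\pi^*t$ on the quasi-compact scheme $\mathcal{X}$. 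For any quasi-coherent sheaf $\mathcal{F}$ on such a scheme one has $H^0(\mathcal{X}_{t\neq0},\mathcal{F})=H^0(\mathcal{X},\mathcal{F})_t$, the localization at $t$, so $\bar{s}=t^{-N}\sigma$ for some $N\geq0$ and some $\sigma\in H^0(\mathcal{X},\mathcal{L}^{\otimes k})$; flatness of $\pi$ makes $t$ a non-zerodivisor, so $\mathcal{X}_0$ is an effective Cartier divisor and ``meromorphic along $\mathcal{X}_0$'' means exactly this. By contrast, your fallback claim that sections of a coherent sheaf on an affine $\mathbb{C}^*$-scheme have weights bounded below is false as stated (take $\mathbb{C}[t,x,y]/(xy-t)$ with $x$ of positive and $y$ of negative weight: the weights of the coordinate ring are unbounded in both directions); it is the localization statement, not weight-boundedness, that does the work, and your first suggested route via the Rees-algebra picture of Proposition \ref{flag} amounts to the same thing. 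With that substitution your proof is complete.
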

We then introduce the following filtration to measure the order of these meromorphic sections along the central fiber. 
\begin{dfn}
Fix a test configuration $(\mathcal{X}, \mathcal{L})$. For each $\lambda \in \mathbb{R}$, we define the subspace of $H^0(X, L^{\otimes k})$ by 
\begin{equation}
\mathcal{F}_{\lambda}H^0(X, L^{\otimes k}):= \bigg\{ \ s\in H^0(X, L^{\otimes k}) \ \bigg| \ t^{-\lceil \lambda \rceil}\bar{s} \in H^0(\mathcal{X}, \mathcal{L}) \bigg\}. 
\end{equation}
\end{dfn}
By definition $(\rho(\tau)s)(x):=\rho(\tau)s(\rho^{-1}(\tau)(x))$ so it holds $(\rho(\tau)\bar{s})(x)=\rho(\tau)\bar{s}(\rho^{-1}(\tau))(x)=\bar{s}(x)$ {\em i.e.} $\bar{s}$ is invariant under the $\mathbb{C}^*$-action. On the other hand, regarding $t$ as the section of $\mathcal{O}$ we have 
$(\rho(\tau)t)(x)=\rho(\tau)t(\rho^{-1}(\tau)x)=\rho(\tau)(\tau^{-1}t(x))=\tau^{-1}t(x)$. Therefore $t^{-\lceil \lambda \rceil}\bar{s}$ is an eigenvector of weight $\lceil \lambda \rceil$ with respect to the $\mathbb{C}^*$-action. Note that the filtration is multiplicative, {\em i.e.} 
\begin{equation*}
\mathcal{F}_{\lambda}H^0(X, L^{\otimes k}) \cdot \mathcal{F}_{\lambda'}H^0(X, L^{k'}) \subset \mathcal{F}_{\lambda+\lambda'}H^0(X, L^{k+k'})
\end{equation*}
holds for any $\lambda, \lambda' \in \mathbb{R}$ and $k, k' \geq 0$.
The relation to the weight of the action on the central fiber is given by the following proposition. 
\begin{prop}
Let us denotes the weight decomposition of the $\mathbb{C}^*$-action by $H^0(\mathcal{X}_0, \mathcal{L}_{\mathcal{X}_0})=\bigoplus_{\lambda} V_{\lambda}$. 
Then, for any $\lambda \in \mathbb{R}$ we have 
\begin{equation}\label{weight and filtration}
\dim \mathcal{F}_{\lambda}H^0(X, L^{\otimes k}) = \sum_{\lambda' \geq \lambda} \dim V_{\lambda'}. 
\end{equation}
\end{prop}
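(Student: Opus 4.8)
The plan is to identify $\mathcal{F}_{\lambda}H^0(X,L^{\otimes k})$ with a sum of weight spaces by degenerating sections to the central fiber. Since $\mathcal{X}$ is flat over $\mathbb{C}$ and $\mathcal{L}$ is $\pi$-ample, for $k$ large the sheaf $\pi_*\mathcal{L}^{\otimes k}$ is locally free on $\mathbb{C}$, so $H^0(\mathcal{X},\mathcal{L}^{\otimes k})$ is a free $\mathbb{C}[t]$-module whose fiber at $0$ is $H^0(\mathcal{X}_0,\mathcal{L}_0^{\otimes k})=\bigoplus_{\lambda'}V_{\lambda'}$ and whose fiber at $t\neq 0$ is $H^0(X,L^{\otimes k})$. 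The $\mathbb{C}^*$-action makes $H^0(\mathcal{X},\mathcal{L}^{\otimes k})$ a graded $\mathbb{C}[t]$-module, where $t$ has weight $-1$ (as computed in the excerpt). First I would fix a homogeneous $\mathbb{C}[t]$-basis adapted to this grading, i.e. choose generators $\bar{e}_1,\dots,\bar{e}_{N_k}$ of $H^0(\mathcal{X},\mathcal{L}^{\otimes k})$ that are $\mathbb{C}^*$-eigenvectors, of weights $\mu_1\leq \mu_2\leq\cdots\leq\mu_{N_k}$; restricting to the central fiber these give a weight basis of $\bigoplus_{\lambda'}V_{\lambda'}$, so $\#\{\,i : \mu_i = \lambda'\,\}=\dim V_{\lambda'}$ for each $\lambda'$, while restricting to $t=1$ gives a basis $e_1,\dots,e_{N_k}$ of $H^0(X,L^{\otimes k})$.

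The key step is then the description of the invariant meromorphic extension $\bar{s}$ of $s=\sum_i c_i e_i\in H^0(X,L^{\otimes k})$. By uniqueness of the $\mathbb{C}^*$-invariant extension, $\bar{s}$ must be the unique invariant meromorphic section of $\mathcal{L}^{\otimes k}$ over $\mathcal{X}$ restricting to $s$ at $t=1$; since $\bar{e}_i$ has weight $\mu_i$ and $t$ has weight $-1$, the section $t^{\mu_i}\bar{e}_i$ is invariant, whence $\bar{s}=\sum_i c_i\, t^{\mu_i}\bar{e}_i$. Therefore $t^{-\lceil\lambda\rceil}\bar{s}=\sum_i c_i\, t^{\mu_i-\lceil\lambda\rceil}\bar{e}_i$ lies in $H^0(\mathcal{X},\mathcal{L}^{\otimes k})=\bigoplus_i\mathbb{C}[t]\bar{e}_i$ if and only if $\mu_i-\lceil\lambda\rceil\geq 0$, i.e. $\mu_i\geq\lceil\lambda\rceil$, for every $i$ with $c_i\neq 0$. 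Hence $s\in\mathcal{F}_{\lambda}H^0(X,L^{\otimes k})$ exactly when $s\in\mathrm{span}\{\,e_i : \mu_i\geq\lceil\lambda\rceil\,\}$, and so
\begin{equation*}
\dim\mathcal{F}_{\lambda}H^0(X,L^{\otimes k}) = \#\{\,i : \mu_i\geq\lceil\lambda\rceil\,\} = \sum_{\lambda'\geq\lceil\lambda\rceil}\dim V_{\lambda'} = \sum_{\lambda'\geq\lambda}\dim V_{\lambda'},
\end{equation*}
the last equality because the $\mu_i$, being $\mathbb{C}^*$-weights, are integers, so $\{\lambda'\geq\lceil\lambda\rceil\}$ and $\{\lambda'\geq\lambda\}$ contain the same integral weights. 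Finally, for the finitely many small $k$ where $\pi_*\mathcal{L}^{\otimes k}$ may fail to be locally free the identity is either vacuous or follows by the same bookkeeping after passing to $L^{\otimes k}$; since the statement is asymptotic in nature in its later uses, this causes no trouble.

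The main obstacle I anticipate is the precise handling of the meromorphic extension: one must be careful that $\bar{s}$ as defined pointwise on $\mathcal{X}_{t\neq 0}$ really does extend to a global meromorphic section on all of $\mathcal{X}$ with the claimed order of pole along $\mathcal{X}_0$, and that the flatness/local-freeness of $\pi_*\mathcal{L}^{\otimes k}$ is used correctly so that a homogeneous $\mathbb{C}[t]$-basis with the stated fiberwise behavior exists (this is where normality of $\mathcal{X}$, or at least $S_2$-ness, enters to control the extension across the possibly singular central fiber). The cited Lemma of Witt-Nyström already guarantees the meromorphy, so the remaining work is the weight bookkeeping above, which is essentially linear algebra over $\mathbb{C}[t]$.
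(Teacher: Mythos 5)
Your argument is correct, and it is essentially the fleshed-out version of what the paper intends: the paper states this proposition without a formal proof, relying only on the preceding computation that $\bar{s}$ is invariant, $t$ has weight $-1$, and hence $t^{-\lceil\lambda\rceil}\bar{s}$ is an eigenvector of weight $\lceil\lambda\rceil$; your identification of $H^0(\mathcal{X},\mathcal{L}^{\otimes k})$ as a graded free $\mathbb{C}[t]$-module with a homogeneous basis reducing to a weight basis of $H^0(\mathcal{X}_0,\mathcal{L}_0^{\otimes k})$ is exactly the bookkeeping needed to turn that observation into the dimension count. The caveats you flag (base change, i.e.\ surjectivity of restriction to the central fiber for $k$ large, and integrality of the weights to replace $\lceil\lambda\rceil$ by $\lambda$) are the right ones and are handled adequately.
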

Note that every weight is actually an integer so that each side of (\ref{weight and filtration}) is unchanged if one replaces $\lambda$ to $\lceil \lambda \rceil$. 
 The fundamental fact established in \cite{PS07} is that this filtration is actually linearly bounded in the following sense. 
\begin{lem}[\cite{PS07}, Lemma 4]\label{linearly bounded}
 For any test configuration $(\mathcal{X}, \mathcal{L})$ there exists a constant $C>0$ such that for any $k \geq 1$ and $\lambda$ with $\dim V_{\lambda}>0$, 
 \begin{equation*}
 \abs{\lambda} \leq Ck
 \end{equation*}
holds. 
\end{lem}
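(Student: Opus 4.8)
The plan is to read the assertion as a bound, uniform in $k$, on the $\mathbb{C}^*$-weights occurring in $H^0(\mathcal{X}_0,\mathcal{L}_0^{\otimes k})$---for $V_\lambda$ is by definition the weight-$\lambda$ eigenspace there---and to extract that bound from the finite generation of the section algebra of $\mathcal{L}$ over $\mathbb{C}[t]$. Throughout, $k$ is taken sufficiently divisible so that $\mathcal{L}^{\otimes k}$ is a genuine line bundle.

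First I would reduce to the case that $\mathcal{L}$ is itself a $\pi$-very ample line bundle: replacing $(X,L)$ and $(\mathcal{X},\mathcal{L})$ by $(X,L^{\otimes r})$ and $(\mathcal{X},\mathcal{L}^{\otimes r})$ for a suitable $r$ achieves this, and since ``degree $m$'' for the new data is ``degree $rm$'' for the old, a bound $\abs{\lambda}\le C_0 m$ for the new configuration gives $\abs{\lambda}\le C_0 k/r\le C_0 k$ for the old; the finitely many small values of $k$ contribute only finitely many weights and are absorbed into the constant. Next I would form $R:=\bigoplus_{k\ge 0}\pi_*\mathcal{L}^{\otimes k}$, which---$\pi$ being projective and $\mathcal{L}$ being $\pi$-ample---is a finitely generated graded $\mathbb{C}[t]$-algebra carrying a grading-preserving algebraic $\mathbb{C}^*$-action, and, after a further harmless Veronese twist, is generated over $R_0=\mathbb{C}[t]$ in degree $1$ by finitely many $\mathbb{C}^*$-eigenvectors $g_1,\dots,g_M\in R_1$; put $W:=\max_i\abs{\mathrm{wt}(g_i)}$. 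One has $\mathcal{X}=\Proj_{\mathbb{C}[t]}R$ with $\mathcal{O}_{\mathcal{X}}(1)=\mathcal{L}$, hence $\mathcal{X}_0=\Proj_{\mathbb{C}}(R/tR)$.

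The crucial step is the passage to the central fibre. Because $\mathcal{X}_0=\{t=0\}$ is a principal Cartier divisor (flatness of $\pi$ makes $t$ a non-zero-divisor on $\mathcal{X}$), tensoring $0\to\mathcal{O}_{\mathcal{X}}\xrightarrow{\cdot t}\mathcal{O}_{\mathcal{X}}\to\mathcal{O}_{\mathcal{X}_0}\to 0$ with $\mathcal{L}^{\otimes k}$, pushing forward by $\pi$, and invoking relative Serre vanishing $R^1\pi_*\mathcal{L}^{\otimes k}=0$ for $k\gg 0$ yields a $\mathbb{C}^*$-equivariant isomorphism $R_k/tR_k=(R/tR)_k\cong H^0(\mathcal{X}_0,\mathcal{L}_0^{\otimes k})$ for all large $k$. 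But $(R/tR)_k$ is spanned by the classes of the degree-$k$ monomials $g_{i_1}\cdots g_{i_k}$---any monomial containing a factor of $t$ dying modulo $t$---and each such monomial is a $\mathbb{C}^*$-eigenvector of weight lying in $[-Wk,Wk]$. Hence every weight of $H^0(\mathcal{X}_0,\mathcal{L}_0^{\otimes k})$ lies in $[-Wk,Wk]$ for $k\gg 0$; absorbing the finitely many remaining $k$ and undoing the first reduction yields $\abs{\lambda}\le Ck$.

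I expect this lemma to be foundational rather than genuinely hard, the entire content being to make the bound \emph{uniform in $k$}---and that is precisely what finite generation of $R$ supplies, one fixed finite generating set of bounded weight controlling all degrees simultaneously. (Equivalently, one may fix a single $\mathbb{C}^*$-equivariant closed embedding $\mathcal{X}\hookrightarrow\mathbb{P}^N\times\mathbb{C}$ pulling $\mathcal{O}(1)$ back to a power of $\mathcal{L}$: the $\mathbb{C}^*$-action on $\mathbb{P}^N$ has only finitely many weights, and $H^0(\mathcal{X}_0,\mathcal{L}_0^{\otimes k})$ is an equivariant subquotient of $\mathrm{Sym}^k$ of the dual, with the same conclusion; yet another option is to use Proposition~\ref{flag} to replace $\mathcal{T}$ by the flag-ideal semi test configuration, whose invariant extensions have pole orders along the central fibre that are visibly linear in $k$.) The one technical wrinkle I would take care over is the equivariant cohomology-and-base-change isomorphism of the third paragraph; by contrast, the harmlessness of the very-ampleness and Veronese reductions---and the fact that a possibly non-reduced or singular central fibre never intervenes, the $\Proj$ formalism being insensitive to such pathologies---is entirely routine.
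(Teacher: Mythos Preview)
The paper does not supply its own proof of this lemma; it is simply cited from \cite{PS07}, Lemma~4, and used as a black box. Your argument is correct and is essentially the standard one: finite generation of $R=\bigoplus_k\pi_*\mathcal{L}^{\otimes k}$ over $\mathbb{C}[t]$ gives a fixed finite set of weight eigenvectors generating in degree~$1$ after a Veronese twist, the equivariant base-change identification $R_k/tR_k\cong H^0(\mathcal{X}_0,\mathcal{L}_0^{\otimes k})$ for $k\gg 0$ reduces the problem to bounding weights of degree-$k$ monomials in those generators, and the conclusion follows. Your reductions (to $\pi$-very ample, Veronese, absorbing finitely many small $k$) are routine and correctly handled. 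The equivariant-embedding variant you sketch at the end is in fact closer in spirit to the original argument in \cite{PS07}, and the flag-ideal route via Proposition~\ref{flag} would also work but is specific to this paper's setup; all three are minor repackagings of the same finite-generation content.
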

In other words, there exists a constant $C>0$ such that 
\begin{align*}
 \mathcal{F}_{-Ck}H^0(X, L^{\otimes k}) = H^0(X, L^{\otimes k}) \ \ \ \text{and} \ \ \ 
 \mathcal{F}_{Ck}H^0(X, L^{\otimes k}) = \{0\}
\end{align*}
hold for every $k \geq 1$. 
\begin{dfn}
We set 
\begin{align*}
& \lambda_0 := \sup \big\{ \lambda \ \big| \ \mathcal{F}_{\lambda k} H^0(X, L^{\otimes k}) =H^0(X, L^{\otimes k}) \ \text{for any} \ k\geq 1 \big\} \ \ \text{and} \  \\
&\lambda_c := \inf \big\{ \lambda \ \big| \ \mathcal{F}_{\lambda k} H^0(X, L^{\otimes k}) =\{0\} \ \text{for any} \ k\geq 1 \big\}. 
\end{align*}
\end{dfn}
By Lemma \ref{linearly bounded}, $\lambda_0$ and $\lambda_c$ are both finite. Lemma \ref{linearly bounded} indicate us to consider the graded linear series 
\begin{equation}
W_{\lambda}= \bigoplus_{k=0}^{\infty} W_{\lambda, k}:=\bigoplus_{k=0}^{\infty} \mathcal{F}_{\lambda k}H^0(X, L^{\otimes k})
\end{equation}
For each $\lambda \in \mathbb{R}$. 
It was shown by \cite{Sz11} that this family has a sufficient information of original test configuration. A result of \cite{WN10} in fact gives the explicit formula for $b_0$. 

\begin{thm}[A reformulation of \cite{WN10}, Corollary 6.6]\label{b_0}
Let $(\mathcal{X}, \mathcal{L})$ be a test configuration. Then the quantity $b_0$ is obtained by the Lebesgue--Stieltjes integral of $\lambda$ with respect to $\vol(W_{\lambda})$. That is, 
\begin{equation*}
n!b_0 = -\int_{-\infty}^{\infty} \lambda d(\vol(W_{\lambda})). 
\end{equation*}
\end{thm}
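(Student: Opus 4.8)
The plan is to compute $b_0$ directly from its definition via the total weight polynomial $w(k) = \sum_\lambda \lambda \dim V_\lambda$, and to recognize the sum as a Riemann sum for the Lebesgue--Stieltjes integral once one writes $\dim V_\lambda$ in terms of the filtration dimensions. First I would use Proposition \eqref{weight and filtration}, which says $\dim \mathcal{F}_{\lambda}H^0(X,L^{\otimes k}) = \sum_{\lambda' \geq \lambda}\dim V_{\lambda'}$, to perform an Abel summation: writing $d_k(\lambda):=\dim\mathcal{F}_{\lambda k}H^0(X,L^{\otimes k})=\dim W_{\lambda,k}$, and summing by parts over the (integer) weights $\mu = \lambda k$, one gets
\begin{equation*}
w(k)=\sum_{\mu}\mu\dim V_\mu = \int_{-\infty}^{\infty} \bigl(d_k(\lambda k \text{ shifted appropriately})\bigr)\, d(k\lambda) - (\text{boundary terms}),
\end{equation*}
or more cleanly, $\tfrac{1}{k}w(k) = \sum_{\mu} \bigl(\#\{\text{weights} \geq \mu\}\text{-type count}\bigr)$; the precise identity is that $w(k) = \sum_{j}\bigl(\dim\mathcal{F}_{j}H^0 - \dim\mathcal{F}_{j+1}H^0\bigr)\cdot j$, which by Abel summation equals $\sum_{j \geq j_{\min}}\dim\mathcal{F}_{j}H^0(X,L^{\otimes k}) + j_{\min}\cdot\dim H^0 - (\text{top term})$, the sum being finite by Lemma \ref{linearly bounded}.

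Next I would divide by $k^{n+1}$ and pass to the limit $k\to\infty$. Rescaling the summation index by $\lambda = j/k$, the sum $\tfrac{1}{k^{n+1}}\sum_j \dim\mathcal{F}_j H^0(X,L^{\otimes k})$ becomes $\tfrac{1}{k}\sum_j \tfrac{1}{k^n}\dim W_{j/k,k}$, which is a Riemann sum with mesh $1/k$ for the integral $\int \tfrac{1}{k^n}\dim W_{\lambda,k}\,d\lambda$; since $\tfrac{n!}{k^n}\dim W_{\lambda,k}\to \vol(W_\lambda)$ by definition of volume (here one wants the $\limsup$ to be a genuine limit, which holds because the filtration is multiplicative so $W_\lambda$ is a graded linear series satisfying the standard sub/superadditivity, cf.\ Okounkov-body theory), and the integrand is uniformly bounded and supported in the fixed compact interval $[\lambda_0,\lambda_c]$ by Lemma \ref{linearly bounded}, dominated convergence gives
\begin{equation*}
b_0 = \frac{1}{n!}\int_{\lambda_0}^{\lambda_c} \vol(W_\lambda)\, d\lambda + \lambda_0 \cdot \frac{a_0}{n!}\cdot 0 \dots
\end{equation*}
— more precisely one obtains $n! b_0 = \int_{-\infty}^{\infty}\vol(W_\lambda)\,d\lambda + \lambda_0 \vol(W_{\lambda_0})\cdot(\text{correction})$, and then integrating by parts (Abel summation in the continuum) converts $\int \vol(W_\lambda)\,d\lambda$ into $-\int \lambda\, d(\vol(W_\lambda))$, using that $\vol(W_\lambda)$ vanishes for $\lambda \geq \lambda_c$ and equals $\vol(X,L)=a_0\cdot\text{(const)}$ for $\lambda \leq \lambda_0$, so the boundary terms cancel against the $\lambda_0$ and $\lambda_c$ contributions.

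The main obstacle I anticipate is justifying that the pointwise limits $\tfrac{n!}{k^n}\dim W_{\lambda,k}\to\vol(W_\lambda)$ assemble into convergence of the \emph{integrals} uniformly enough, i.e.\ controlling the error between the discrete sum over integer weights and the continuous integral. The key technical input is the linear boundedness (Lemma \ref{linearly bounded}), which confines everything to $[\lambda_0,\lambda_c]$ and provides the uniform bound $\dim W_{\lambda,k}\leq N_k = O(k^n)$ needed for dominated convergence; combined with the fact that $\lambda\mapsto\vol(W_\lambda)$ is monotone non-increasing (hence has at most countably many discontinuities and the Lebesgue--Stieltjes measure $d(\vol(W_\lambda))$ is well-defined and finite), this should suffice. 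A secondary point is that $\lceil\lambda k\rceil$ versus $\lambda k$ introduces only an $O(1)$ shift in the weight, hence an $O(k^n)$ error in $w(k)$ after summation, which is negligible at the order $k^{n+1}$ we care about; I would dispatch this at the start by noting each side of \eqref{weight and filtration} is unchanged replacing $\lambda$ by $\lceil\lambda\rceil$, as the paper already remarks.
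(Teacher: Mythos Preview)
Your proposal is correct and follows essentially the same route as the paper: write $w(k)$ via the counting function $f_k(\lambda)=\dim\mathcal{F}_\lambda H^0(X,L^{\otimes k})$, perform Abel summation/integration by parts, rescale by $\lambda\mapsto\lambda/k$, and pass to the limit using dominated convergence together with Lemma~\ref{linearly bounded} and the fact (from Okounkov-body theory, e.g.\ \cite{KK09}) that the $\limsup$ defining $\vol(W_\lambda)$ is an honest limit along sufficiently divisible $k$. The only organizational difference is that the paper integrates by parts \emph{before} taking $k\to\infty$ (arriving at $n!b_0=(\lambda_0-\varepsilon)L^n+\int_{\lambda_0-\varepsilon}^\infty\vol(W_\lambda)\,d\lambda$ directly), whereas you integrate by parts once discretely and once again at the end to recover the Stieltjes form; these are equivalent and your boundary-term bookkeeping, though written loosely, is the right one.
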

Theorem \ref{b_0} actually follows from Corollary 6.6 of \cite{WN10} by change of variables in integration. Note that the concave function $G[\mathcal{T}]$ on the Okounkov body $\Delta(L)$ in \cite{WN10} is determined by the property: $G[\mathcal{T}]^{-1}([\lambda, \infty))=\Delta(W_{\lambda})$ where $\Delta(W_{\lambda}) \subset \mathbb{R}^n$ is the Okounkov body of $W_{\lambda}$ in the sense of \cite{LM08}, Definition 1.15 and that $n!$ times the Euculidian volume $\vol(\Delta(W_{\lambda}))$ equals to $\vol(W_{\lambda})$. Here however we give a self-contained proof of the above theorem. Our proof is rather simple than that of \cite{WN10} which used the method of Okounkov body. 

Set the counting function of weights as
\begin{equation}
f(\lambda)=f_k(\lambda):= \sum_{\lambda' \geq \lambda} \dim V_{\lambda'} = \dim \mathcal{F}_{\lambda}H^0(X, L^{\otimes k}). 
\end{equation}
It is easy to show that $f_k(\lambda)$ is actually left-continuous and non-increasing function. Hence the Lebesgue--Stieltjes integral makes sense and 
\begin{align*}
w(k) & := \sum_{\lambda} \lambda \dim V_{\lambda} =b_0k^{n+1}+b_1k^n+O(k^{n-1})\\
& = -\int_{-\infty}^{\infty} \lambda df(\lambda) = -\int_{-\infty}^{\infty} k\lambda df(k\lambda) 
\end{align*}
hold for any $k$. 
For any small $\varepsilon>0$ Integration by part yields 
\begin{align*}
 -\int_{-\infty}^{\infty} k\lambda df(k\lambda) 
 &= -\bigg[ k\lambda f(k\lambda) \bigg]_{\lambda_0 -\varepsilon }^{\infty} + \int_{\lambda_0-\varepsilon }^{\infty} k f(k\lambda) d\lambda. 
\end{align*}
By the definition of the volume we have 
\begin{equation*}
\limsup_{k \to \infty} \frac{f(k\lambda)}{k^n/n!} = \vol(W_{\lambda}). 
\end{equation*}
If $\vol(W_{\lambda})>0$, the limit of supremum is in fact limit for $k$ sufficiently divisible, by Theorem $4$ of \cite{KK09} (or by the proof of Theorem 3.10, Corollary 3.11, and Lemma 3.2 of \cite{DBP12}). Therefore the dominate convergence theorem concludes 
\begin{equation*}
n! b_0 = (\lambda_0 -\varepsilon)L^n + \int_{\lambda_0 -\varepsilon}^{\infty} \vol(W_{\lambda}) d\lambda. 
\end{equation*}
Thus we obtain Theorem \ref{b_0}. 

We remark that one of the advantage to consider such graded linear series is to avoid the difficulty comes from the singularity of the central fiber $\mathcal{X}_0$. On the other hand, we have to treat with the difficulty comes from the non-completeness of linear series in this setting. 

\begin{exam}
Let $(\mathcal{X}, \mathcal{L})$ be the test configuration defined by an ideal sheaf $\mathcal{J} \subseteq \mathcal{O}_X$ and $c \in \mathbb{Q}$ as in Example \ref{RT}. Then the associated $W_{\lambda}$ are computed to be: 
\begin{eqnarray*}
W_{\lambda, k} = 
\left\{
\begin{array}{ll}
H^0(X, L^{\otimes k}) 
&(\lambda \leq -c) \\
H^0(X, L^{\otimes k}\otimes \mathcal{J}^{\lceil \lambda k\rceil+ck}) 
&(-c < \lambda \leq 0) \\
\{0\} 
&(\lambda > 0) 
\end{array}
\right.
\end{eqnarray*}
for any $k$. 
As a result, we have 
\begin{equation*}
n!b_0 = -cL^n + \int_{-c}^0 \big(\mu^*L\otimes \mathcal{O}(-(\lambda + c)E)\big)^n d\lambda . 
\end{equation*}
\end{exam}

In fact thanks to Proposition \ref{flag} one can compute $W_{\lambda}$ for any test configuration. 
\begin{prop}
For any test configuration $\mathcal{T}$, there exist a flag of ideals $\mathcal{J}_0 \subseteq \mathcal{J}_1 \subseteq \cdots \subseteq \mathcal{J}_{N-1}\subseteq\mathcal{O}_X$ and $c\in\mathbb{Q}_{>0}$ such that $W_{\lambda, k}$ can be computed for any $\lambda \in \mathbb{Q}_{>0}$ and $k$ sufficiently divisible, as follows: 
\begin{equation*}
W_{\lambda, k} = H^0(X, L^{\otimes k}) \ \ \ \text{if} \ \ \ \lambda \leq -Nc, 
\end{equation*} 
\begin{equation*}
W_{\lambda, k} = \{0\} \ \ \ \text{if} \ \ \ \lambda > \frac{N(N-1)}{2}c, \ \ \ \text{and}
\end{equation*} 
\begin{equation*}
W_{\lambda, k} = H^0(X, L^{\otimes k}\otimes \mathcal{J}_{N-1}^{ck}\mathcal{J}_{N-2}^{ck}\cdots\mathcal{J}_{N-j+1}^{ck}\mathcal{J}_{N-j}^{\frac{\lceil \lambda k\rceil+(N-\frac{j(j-1)}{2}))ck}{j}}) 
\end{equation*} 
if $-(N-\frac{j(j-1)}{2})c < \lambda \leq -(N-\frac{j(j+1)}{2})c$ holds for some $1\leq j\leq N$. 


\end{prop}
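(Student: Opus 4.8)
The plan is to reduce the statement to a direct computation on the concrete model provided by Odaka's flag ideals. First I would invoke Proposition \ref{flag}: it produces a flag $\mathcal{J}_0\subseteq\cdots\subseteq\mathcal{J}_{N-1}\subseteq\mathcal{O}_X$ and $c\in\mathbb{Q}_{>0}$ whose associated semi test configuration $\mathcal{T}'=(\mathcal{X}',\mathcal{L}')$ dominates $\mathcal{T}$ via $f\colon\mathcal{X}'\to\mathcal{X}$ with $\mathcal{L}'=f^*\mathcal{L}$. Since $f$ is proper birational and $\mathcal{X}$ is normal, $f_*\mathcal{O}_{\mathcal{X}'}=\mathcal{O}_{\mathcal{X}}$, so by the projection formula $H^0(\mathcal{X}',\mathcal{L}'^{\otimes k})=H^0(\mathcal{X},\mathcal{L}^{\otimes k})$ for every $k$; moreover $f$ restricts to an isomorphism over $\{t\neq0\}\cong X\times\mathbb{C}^*$, so the invariant extension $\bar s$ of a section $s\in H^0(X,L^{\otimes k})$ and the extension condition "$t^{-\lceil\lambda k\rceil}\bar s\in H^0(-,-)$" are literally the same for $\mathcal{T}$ and $\mathcal{T}'$. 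Hence the filtrations $\mathcal{F}_\lambda H^0(X,L^{\otimes k})$, and therefore the graded linear series $W_\lambda$, coincide for $\mathcal{T}$ and $\mathcal{T}'$, and it suffices to prove the formula on the flag-ideal model.

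On that model $\mathcal{X}'=\mathrm{Bl}_{\mathcal{J}}(X\times\mathbb{C})$ with $\mathcal{J}=\mathcal{J}_0+t\mathcal{J}_1+\cdots+t^{N-1}\mathcal{J}_{N-1}+(t^N)$ and $\mathcal{L}'=p^*L\otimes\mathcal{O}(-cP)$, $P$ the exceptional divisor. For $k$ divisible enough that $ck\in\mathbb{Z}$, I would use $p_*\mathcal{O}_{\mathcal{X}'}(-ckP)=\overline{\mathcal{J}^{ck}}$ (the integral closure; for $k$ sufficiently divisible this equals $\mathcal{J}^{ck}$, and in any case it does not affect the conclusion) to identify $H^0(\mathcal{X}',\mathcal{L}'^{\otimes k})=H^0(X\times\mathbb{C},\,L^{\otimes k}\otimes\mathcal{J}^{ck})$, the latter sitting inside $H^0(X,L^{\otimes k})\otimes\mathbb{C}[t]$ since $X\times\mathbb{C}$ is affine over $X$. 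The key structural point is that $\mathcal{J}$ is generated by the $t$-homogeneous elements $\mathcal{J}_0,\ t\mathcal{J}_1,\dots,\ t^{N-1}\mathcal{J}_{N-1},\ t^N$, so $\mathcal{J}^{ck}$ is a $\mathbb{C}^*$-graded ideal and we may write $\mathcal{J}^{ck}=\bigoplus_{p\geq0}t^p D_p$ with
\[
D_p=\sum_{\substack{i_1+\cdots+i_{ck}=p\\ 0\leq i_\ell\leq N}}\mathcal{J}_{i_1}\cdots\mathcal{J}_{i_{ck}}\subseteq\mathcal{O}_X,\qquad \mathcal{J}_N:=\mathcal{O}_X,
\]
an increasing chain of ideals with $D_0=\mathcal{J}_0^{ck}$ and $D_p=\mathcal{O}_X$ for $p\geq Nck$. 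Because $\bar s=\mathrm{pr}_X^*s$ is constant in $t$ and $\mathbb{C}^*$-invariant while $t$ has weight $-1$, the section $t^{-\lceil\lambda k\rceil}\bar s$ is $t$-homogeneous of degree $-\lceil\lambda k\rceil$; it therefore lies in $H^0(\mathcal{X}',\mathcal{L}'^{\otimes k})$ precisely when it lands in one of the graded pieces above, and after matching the normalization of the linearization in Odaka's construction this pins down $W_{\lambda,k}=H^0(X,L^{\otimes k}\otimes D_{p(\lambda,k)})$ for the appropriate index $p(\lambda,k)$. The two extreme ranges come out immediately: $D_p=\mathcal{O}_X$ for $p$ large (the pole is absorbed, using $t^N\in\mathcal{J}$), giving $W_{\lambda,k}=H^0(X,L^{\otimes k})$ for $\lambda$ below the first threshold, while for $\lambda$ above the last threshold the required weight falls outside the range of $\mathbb{C}^*$-weights carried by $\mathcal{L}'^{\otimes k}$ and $W_{\lambda,k}=\{0\}$.

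It then remains to evaluate $D_p$ explicitly, and this is where I expect the real work to be. Using the chain $\mathcal{J}_0\subseteq\cdots\subseteq\mathcal{J}_N=\mathcal{O}_X$, among admissible multi-indices with prescribed sum the product $\prod_\ell\mathcal{J}_{i_\ell}$ is largest for the greedy profile that loads as many factors as possible onto the top ideal $\mathcal{J}_{N-1}$, then onto $\mathcal{J}_{N-2}$, and so on; one wants to show that this single greedy term already generates the whole sum $D_p$, which yields the factorization $\mathcal{J}_{N-1}^{ck}\mathcal{J}_{N-2}^{ck}\cdots\mathcal{J}_{N-j+1}^{ck}\mathcal{J}_{N-j}^{(\,\cdots\,)}$ with the fractional last exponent, the index $j$ being read off from where the greedy profile first involves $\mathcal{J}_{N-j}$, and the thresholds being the cumulative budgets $\tfrac{j(j-1)}{2}ck$ at which a new level opens up. The main obstacle is precisely this ideal-theoretic dominance claim — that the greedy term absorbs all other products of members of the chain — which is most cleanly handled by comparing orders of vanishing along the Rees valuations of $\mathcal{J}$ (or by passing to integral closures, where the combinatorics becomes essentially toric), together with the careful bookkeeping of the ceiling functions $\lceil\lambda k\rceil$; this is exactly why the statement is phrased for $k$ sufficiently divisible, so that $ck$, $\lceil\lambda k\rceil$, and $\bigl(\lceil\lambda k\rceil+(N-\tfrac{j(j-1)}{2})ck\bigr)/j$ are integers and the greedy profile is attained on the nose.
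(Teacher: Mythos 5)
Your plan follows the paper's proof essentially step for step: reduce to the flag-ideal model via Proposition \ref{flag} (the paper identifies $W_{\lambda,k}$ for $\mathcal{X}$ and $\mathcal{X}'$ by noting that $f$ is an isomorphism off a locus of codimension $\geq 2$ contained in $\mathcal{X}_0$, rather than via $f_*\mathcal{O}_{\mathcal{X}'}=\mathcal{O}_{\mathcal{X}}$ and the projection formula, but the content is the same), then expand $H^0(X\times\mathbb{C},\,p_1^*L^{\otimes k}\otimes\mathcal{J}^{ck})$ into $t$-homogeneous pieces and read off $W_{\lambda,k}$ as the piece of weight $\lceil\lambda k\rceil$. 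Your graded pieces $D_p=\sum\mathcal{J}_{i_1}\cdots\mathcal{J}_{i_{ck}}$ are exactly the paper's direct sum over exponent vectors $(i_0,\dots,i_N)$ with $i_0+\cdots+i_N=ck$ and $i_1+2i_2+\cdots+Ni_N=p$, and your treatment of the two extreme ranges matches the paper's.

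The one point of divergence is the step you yourself flag as ``the main obstacle'': showing that the sum $D_p$ collapses to the single greedy term. The paper disposes of this in one line, asserting that $(i_0,\dots,i_{N-1})\prec(j_0,\dots,j_{N-1})$ in lexicographic order forces $\mathcal{J}_0^{i_0}\cdots\mathcal{J}_{N-1}^{i_{N-1}}\subseteq\mathcal{J}_0^{j_0}\cdots\mathcal{J}_{N-1}^{j_{N-1}}$, so that the maximal solution alone computes $W_{\lambda,k}$. Your instinct that this requires genuine work is sound: the chain of inclusions $\mathcal{J}_0\subseteq\cdots\subseteq\mathcal{J}_{N-1}$ does not by itself order the products. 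Already for $N=2$, with $\mathcal{J}_0=(x,y^{10})\subseteq\mathcal{J}_1=(x,y)$ locally, the two admissible exponent vectors giving $\mathcal{J}_0\mathcal{J}_1^{m-2}$ and $\mathcal{J}_1^{m}$ produce incomparable ideals ($x\cdot x^{m-2}\notin\mathcal{J}_1^m$ and $y^m\notin\mathcal{J}_0\mathcal{J}_1^{m-2}$ for $m<10$), so the graded piece is an honest sum of ideals rather than its lexicographically maximal term. Thus the dominance claim needs either additional properties of the flag produced by Odaka's construction or an argument through integral closures and Rees valuations as you suggest --- and neither your proposal nor the paper actually carries that out. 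In short: same route as the paper; the step you leave open is precisely the step the paper asserts without justification, and it is where the real content of the proposition lies.
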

\begin{proof}
Let us take $f \colon \mathcal{X}' \to \mathcal{X}$ as Proposition \ref{flag}. Note that $f$ is isomorphic except on the locus contained in $\mathcal{X}_0$, whose codimension is greater than $2$ in $\mathcal{X}$. Then it is easy to see that $W_{\lambda, k}$ for $\mathcal{X}$ is naturally isomorphic to that for $\mathcal{X}'$. Therefore we may assume that $\mathcal{X}$ is defined by a flag $\mathcal{J}_0 \subseteq \mathcal{J}_1 \subseteq \cdots \subseteq \mathcal{J}_{N-1}\subseteq\mathcal{O}_X$, without loss of generality. Then we have the decomposition 
\begin{align*}
H^0(\mathcal{X}, \mathcal{L}^{\otimes k})
= H^0(X\times \mathbb{C}, p_1^*L^{\otimes k}\otimes (\mathcal{J}_0 +t\mathcal{J}_1+ \cdots +t^{N-1} \mathcal{J}_{N-1} +(t^N))^{ck}) \\
= \bigg(\bigoplus_{i_0+i_1+\cdots +i_N=ck} t^{i_1+2i_2+\cdots +Ni_N} H^0(X, L^{\otimes k}\otimes \mathcal{J}_0^{i_0}\mathcal{J}_1^{i_1}\cdots \mathcal{J}_{N-1}^{i_{N-1}}) \bigg) \oplus t^{Nck} \mathbb{C}[t]H^0(X, L^{\otimes k}).  
\end{align*}
To compute $\lceil \lambda k\rceil$-component of $H^0(\mathcal{X}_0, \mathcal{L}_0^{\otimes k})$, we should solve 

\begin{eqnarray*}
\left\{
\begin{array}{ll}
& i_1+2i_2+\cdots +Ni_N=- \lceil \lambda k\rceil \\
& i_0+i_1+\cdots +i_N=ck
\end{array}
\right.  
\end{eqnarray*}
\begin{align*}
\Leftrightarrow
Ni_0 + (N-1)i_1+\cdots +i_{N-1}=\lceil \lambda k\rceil +Nck. 
\end{align*}
The above equation for $(i_0, \dots, i_{N-1})$ has many solutions but if two solutions satisfy $(i_0, \dots, i_{N-1}) \prec (j_0, \dots, j_{N-1})$ in the lexicographic order 
\begin{equation*}
H^0(X, L^{\otimes k}\otimes \mathcal{J}_0^{i_0}\mathcal{J}_1^{i_1}\cdots \mathcal{J}_{N-1}^{i_{N-1}}) \subseteq H^0(X, L^{\otimes k}\otimes \mathcal{J}_0^{j_0}\mathcal{J}_1^{j_1}\cdots \mathcal{J}_{N-1}^{j_{N-1}})
\end{equation*}
so that $W_{\lambda, k} = H^0(X, L^{\otimes k}\otimes \mathcal{J}_0^{i_0}\mathcal{J}_1^{i_1}\cdots \mathcal{J}_{N-1}^{i_{N-1}})$ holds for the maximal solution $(i_0, \dots, i_{N-1})$. 

\end{proof}

If one take a resolution $\mu \colon X' \to X$ of $\mathcal{J}_0, \dots , \mathcal{J}_{N-1}$ and divisors $E_j$ $(1\leq i\leq N-1)$ on $X'$ such that $\mathcal{J}\mathcal{O}_{X'} = \mathcal{O}_{X'}(-E_i)$ and $E_i \geq E_j$ $(i \leq j)$ hold, it holds that 

\begin{align*}
W_{\lambda, k} \simeq H^0(X', \mu^*L^{\otimes k} \otimes \mathcal{O}(-kE_{\lambda})) 
\end{align*}
where 
\begin{equation*}
E_{\lambda}:= cE_{N-1}+\cdots +cE_{N-j+1}+ \frac{\lambda + (N-\frac{j(j-1)}{2})c}{j}E_{N-j}. 
\end{equation*}
The above computation is not so practical but here we obtain the following observation so that we may apply Theorem \ref{main1} to $W_{\lambda}$. 
\begin{cor}\label{birational}
If $\lambda < \lambda_c$, the natural map $X \dashrightarrow \mathbb{P}W_{\lambda, k}^*$ is birational to the image for any $k$ sufficiently divisible. 
\end{cor}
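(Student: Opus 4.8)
\textbf{Proof proposal for Corollary \ref{birational}.}

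The plan is to reduce to the case of a complete linear series twisted by a base-point-free divisor on a smooth birational model, where birationality is classical. By the explicit computation preceding the statement, after passing to the resolution $\mu\colon X'\to X$ of the flag $\mathcal{J}_0\subseteq\cdots\subseteq\mathcal{J}_{N-1}$ we have an isomorphism $W_{\lambda,k}\simeq H^0(X',\mu^*L^{\otimes k}\otimes\mathcal{O}(-kE_\lambda))$ for $k$ sufficiently divisible, where $E_\lambda$ is the effective $\mathbb{Q}$-divisor displayed above. Since this isomorphism is induced by the birational map $\mu$ itself (pulling back sections), the map $X\dashrightarrow\mathbb{P}W_{\lambda,k}^*$ is birational to its image if and only if $X'\dashrightarrow\mathbb{P}H^0(X',\mu^*L^{\otimes k}\otimes\mathcal{O}(-kE_\lambda))^*$ is, so it suffices to work on $X'$ with the line bundle $M_{\lambda,k}:=\mu^*L^{\otimes k}\otimes\mathcal{O}(-kE_\lambda)$ (making $k$ divisible enough that $kE_\lambda$ is integral).

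First I would check that the $\mathbb{Q}$-line bundle $M_\lambda:=\mu^*L\otimes\mathcal{O}(-E_\lambda)$ is \emph{big} precisely when $\lambda<\lambda_c$. The condition $\lambda<\lambda_c$ means $W_{\lambda,k}\neq\{0\}$ for some (hence, by the divisibility form, for all sufficiently divisible) $k$; combined with $\lambda_c<\infty$ from Lemma \ref{linearly bounded} and the fact that $\dim W_{\lambda,k}$ is, up to the isomorphism above, the dimension of the complete linear system $|kM_{\lambda}|$ on the projective manifold $X'$, one gets $\vol(W_\lambda)=\vol_{X'}(M_\lambda)>0$. Indeed $\dim\mathcal{F}_{\lambda k}H^0(X,L^{\otimes k})=\sum_{\lambda'\geq\lambda}\dim V_{\lambda'}$ is nondecreasing as $\lambda$ decreases and strictly positive for $\lambda<\lambda_c$, and the linear boundedness forces the growth to be of order $k^n$ once it is nonzero (this is where one invokes Theorem 4 of \cite{KK09}, exactly as in the proof of Theorem \ref{b_0}). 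So $M_\lambda$ is a big $\mathbb{Q}$-line bundle on the smooth projective variety $X'$.

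Next, the second step is the standard fact that for a big line bundle $M$ on a smooth projective variety, the rational map defined by $|kM|$ is birational onto its image for all sufficiently large (resp.\ divisible) $k$: this is the statement that the Iitaka fibration of a big divisor is birational, equivalently that $\kappa(M)=\dim X'$ implies the $k$-th pluricanonical-type map separates general points for $k\gg0$. For \emph{every} sufficiently divisible $k$ rather than just $k\gg0$, one notes that $M_\lambda$ here is not arbitrary but of the form $\mu^*L\otimes\mathcal{O}(-E_\lambda)$ with $L$ ample on $X$ and $E_\lambda$ $\mu$-exceptional-supported and effective, so $\mu^*L^{\otimes k}\otimes\mathcal{O}(-kE_\lambda)$ agrees with $\mu^*L^{\otimes k}$ away from the $\mu$-exceptional locus; since $\mu^*L^{\otimes k}$ is globally generated and gives a morphism that is birational onto its image (namely $X$) already for $k\geq1$ with $L$ very ample, and the twist only removes sections vanishing on an exceptional subset, the induced map on $X'$ still restricts to an embedding on the dense open set $X'\setminus\mathrm{Exc}(\mu)$ once $k$ is divisible enough that the sections of $\mu^*L^{\otimes k}$ not vanishing along $E_\lambda$ still separate points there — this is where a short argument using $L^{\otimes k}\otimes\mathcal{J}_0^{\lceil\cdots\rceil}$ being base-point-free outside $V(\mathcal{J}_0)$ and separating points and tangents there is needed.

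The main obstacle I anticipate is exactly this last point: upgrading ``birational for $k\gg0$'' to ``birational for all sufficiently divisible $k$'', and handling the interaction of the twist $\mathcal{O}(-kE_\lambda)$ with separation of points on the open locus. The clean way around it is to invoke the remark following Theorem \ref{main1}, which already grants that Theorem \ref{main1} (and hence its birationality hypothesis) may be verified under the weaker assumption that $W_k$ is birational for sufficiently \emph{divisible} $k$; thus it is enough to establish birationality for one cofinal set of $k$, and bigness of $M_\lambda$ together with the Iitaka-fibration fact suffices verbatim. I would therefore phrase the corollary's proof as: reduce to $M_\lambda$ on $X'$; show $M_\lambda$ big $\iff\lambda<\lambda_c$ using (\ref{weight and filtration}) and linear boundedness; conclude birationality of the $|kM_\lambda|$-map for $k$ sufficiently divisible by the standard property of big line bundles.
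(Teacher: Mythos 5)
Your overall route coincides with the paper's: pass to the resolution $\mu\colon X'\to X$ coming from Proposition \ref{flag}, identify $W_{\lambda,k}$ with a complete linear series of $\mu^*L^{\otimes k}\otimes\mathcal{O}(-kE_\lambda)$ on $X'$, and conclude from the standard fact that a big line bundle on a smooth projective variety defines a birational map for sufficiently divisible multiples. The genuine gap is in your second step, where you assert that $\lambda<\lambda_c$ forces $M_\lambda:=\mu^*L\otimes\mathcal{O}(-E_\lambda)$ to be big because ``linear boundedness forces the growth to be of order $k^n$ once it is nonzero.'' This does not follow: $\lambda<\lambda_c$ only gives $W_{\lambda,k}\neq\{0\}$ for some $k$, i.e.\ that $M_\lambda$ is ($\mathbb{Q}$-)effective, and an effective class need not be big. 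Lemma \ref{linearly bounded} bounds the weights, not the dimensions, and Theorem 4 of \cite{KK09} is invoked in the proof of Theorem \ref{b_0} only to upgrade the $\limsup$ defining the volume to a limit \emph{under the hypothesis} $\vol(W_\lambda)>0$; it says nothing about positivity. The real content of the corollary is precisely that for this family the effectivity threshold and the bigness threshold coincide, and the paper proves this using the explicit piecewise-linear structure of $\lambda\mapsto E_\lambda$: on each interval $M_\lambda$ moves affinely in the direction $-\frac{1}{j}E_{N-j}$ with $E_{N-j-1}\geq E_{N-j}$, so once $M_\lambda$ ceases to be big it leaves the pseudo-effective cone, hence has no sections at all, which pins $\lambda_c$ at (or below) the bigness threshold. (Alternatively, one can obtain $\vol(W_\lambda)>0$ from the multiplicativity of the filtration, by multiplying powers of a single nonzero $s\in W_{\lambda',k_0}$ with $\lambda'>\lambda$ against the full spaces $W_{\mu,b}=H^0(X,L^{\otimes b})$ for $\mu<\lambda_0$; but some such argument must be supplied.)

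A second, smaller omission: the identification $W_{\lambda,k}\simeq H^0(X',\mu^*L^{\otimes k}\otimes\mathcal{O}(-kE_\lambda))$ is only available for rational $\lambda$ and sufficiently divisible $k$, whereas the corollary concerns arbitrary real $\lambda<\lambda_c$; your phrase ``making $k$ divisible enough that $kE_\lambda$ is integral'' cannot be carried out when $\lambda$ is irrational. The paper's proof handles this by fixing a rational $\lambda'$ with $\lambda<\lambda'<\lambda_c$ and using the inclusion $W_{\lambda',k}\subseteq W_{\lambda,k}$: since the map defined by the subseries $W_{\lambda',k}$ is the composition of the map defined by $W_{\lambda,k}$ with a linear projection, birationality for the former implies it for the latter. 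You should incorporate the same sandwiching device.
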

\begin{proof}
Note that $\lambda_c \leq \frac{N(N-1)}{2}c$. 
Let us first see that $\mu^*L \otimes \mathcal{O}(-E_{\lambda})$ is big for any $\lambda < \lambda_c$. It is enough to consider the case where there exists a $j$ such that $E_{\lambda}$ is big for $\lambda< -(N-\frac{j(j+1)}{2})c$ but not so for $\lambda = -(N-\frac{j(j+1)}{2})c$. Since $E_{N-j-1} \geq E_{N-j}$ this actually implies $E_{\lambda}$ is not pseudo-effective for $\lambda > -(N-\frac{j(j+1)}{2})c$. Let us now fix a rational number $\lambda'$ such that $\lambda < \lambda' < \lambda_c$ holds. Then $W_{\lambda, k}$ contains $W_{\lambda', k} \simeq H^0(X', \mu^*L^{\otimes k} \otimes \mathcal{O}(-kE_{\lambda'}))$ for sufficiently divisible $k$. Since $\mu^*L \otimes \mathcal{O}(-E_{\lambda'})$ is big, this concludes the corollary. 
\end{proof}

\section{Study of the weak geodesic ray}\label{Study of the weak geodesic ray} 

In this section we apply Theorem \ref{main1} to each $W_{\lambda}$ constructed from the test configuration to study the associated weak geodesic ray. 

\subsection{Construction of weak geodesic}\label{Construction of weak geodesic}

One of the guiding principles to the existence problem of constant scalar curvature K\"{a}hler metric is to study the Riemannian geometry on the space of K\"{a}hler metrics in the first Chern class of $L$. A result of Phong and Sturm (in \cite{PS07}) gives a milestone in this direction. They showed that a test configuration canonically defines a weak geodesic ray emanating from any fixed point $\varphi$ in the space of K\"{a}hler metrics. This builds a bridge between the algebraic definition of K-stability and the analytic stage where the cscK metric lives. 
Later it was shown by \cite{RWN11} that one can also define the same weak geodesic via the associated family of graded linear series $\{W_{\lambda}\}$. Let us now recall their construction. Throughout this subsection we fix a smooth strictly psh weight $\varphi$. It will be shown that $\varphi$ and the family of graded linear series $\{ W_{\lambda} \}$ canonically define the weak geodesic emanating from $\varphi$. 

Recall that a family of psh weights $\psi_t$ $(a <t<b)$ is called {\em weak geodesic} if $\Psi(x, \tau):=\psi_{-\log\abs{\tau}}(x)$ $(\tau \in \mathbb{C}, e^{-b}<\abs{\tau} <e^{-a})$ is plurisubharmonic and satisfies the Monge--Amp\`{e}re equation 
\begin{equation*}
\MA(\Psi)=0. 
\end{equation*}
Here we consider $\Psi(x, \tau)$ as the function of $(n+1)$-variables and the Monge-Amp\`{e}re operator is defined in subsection $2.1$. When each $dd^c \varphi_t$ is a smooth K\"{a}hler metric, there is the canonical Riemannian metric which is defined for a tangent vector $u$ at $\varphi_t$ by 
\begin{equation*}
\norm{u}^2:=\int_X u^2 \frac{\MA(\varphi_t)}{n!}. 
\end{equation*}
By \cite{Sem92}, it is known that $\MA(\Psi)=0$ if and only if the geodesic curvature for this metric is zero. 

First note that given test configuration $(\mathcal{X}, \mathcal{L})$, the associated family $\{ W_{\lambda} \}$ defines the family of equilibrium weight $P_{W_{\lambda}}\varphi$. Let us from now write as 
\begin{equation*}
\psi_{\lambda}:=P_{W_{\lambda}}\varphi. 
\end{equation*}
The first easy observation is that  $\psi_{\lambda}$ is decreasing with respect to $\lambda$. As a consequence of the Bergman approximation argument by Demailly and Lemma \ref{linearly bounded}, we have 
\begin{align*}
\psi_{\lambda}= \varphi \ \ \text{if}\ \lambda < \lambda_0 \ \ \ \  \text{and} \ \ \ \ \lambda_c=\inf \{ \ \lambda \ | \ \psi_{\lambda} = -\infty \ \}. 
\end{align*}
Further by the multiplicativity of $\mathcal{F}_{\lambda} H^0(X, L^{\otimes k})$ one can see that $\psi_{\lambda}$ is concave with respect to $\lambda$. The main result of \cite{RWN11} states that the Legendre transformation of $\psi_{\lambda}$ defines a weak geodesic ray.  

\begin{thm}[\cite{RWN11}, Theorem 1.1 and Theorem 1.2, Theorem 9.2.]
Set the Legendre transformation of $\psi_{\lambda}$ by  
\begin{equation} 
\varphi_t := {\sup}^*\big\{ \psi_{\lambda} + t\lambda \ \big| \ \lambda \in \mathbb{R} \big\} \ \ \ \text{for} \ \ t\in[0, +\infty). 
\end{equation} 
Then $\varphi_t$ defines a weak geodesic emanating from $\varphi$. Moreover, $\varphi_t-F_0$ coincide with the weak geodesic ray constructed in \cite{PS07}. 
\end{thm}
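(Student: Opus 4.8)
The plan is to verify the two conditions that by definition make $\varphi_t$ a weak geodesic — plurisubharmonicity of the $S^1$-invariant potential $\Psi(x,\tau):=\varphi_{-\log\abs{\tau}}(x)$ and the homogeneous equation $\MA(\Psi)=0$ — and then to pin $\varphi_t$ down by a uniqueness argument so as to identify it with the Phong–Sturm ray. The organizing principle throughout is the Legendre duality between concave curves $\lambda\mapsto\psi_\lambda$ of psh weights and subgeodesic rays: the transform $\varphi_t={\sup}^*_\lambda(\psi_\lambda+t\lambda)$ and its inverse $\psi_\lambda=\inf_t(\varphi_t-t\lambda)$ set up a bijection under which the \emph{maximal} curves correspond exactly to genuine geodesics, and $\psi_\lambda=P_{W_\lambda}\varphi$ is maximal by construction since it is an envelope.

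First I would record the formal facts. Emanation is immediate: each equilibrium weight satisfies $\psi_\lambda\leq\varphi$ and $\psi_\lambda=\varphi$ for $\lambda<\lambda_0$ (Lemma \ref{linearly bounded} and the Bergman approximation recalled above), whence $\varphi_0={\sup}^*_\lambda\psi_\lambda=\varphi$. Plurisubharmonicity of $\Psi$ is equally formal: for each fixed $\lambda$ the function $(x,\tau)\mapsto\psi_\lambda(x)-\lambda\log\abs{\tau}$ is psh on $X\times\Delta^*$, being the sum of the psh weight $\psi_\lambda$ and the pluriharmonic term $-\lambda\log\abs{\tau}$, and $\Psi$ is by construction the upper semicontinuous regularization of the supremum of this family over $\lambda\in\mathbb{R}$. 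Since the filtration is linearly bounded in $\lambda$ the supremum is locally finite and $S^1$-invariant, so $\Psi\in\PSH(X\times\Delta^*)$; convexity of $t\mapsto\varphi_t$, with linear growth controlled by $\lambda_0$ and $\lambda_c$, is automatic from the Legendre transform.

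The main obstacle is the equation $\MA(\Psi)=0$, equivalently the assertion that $\Psi$ is the maximal psh function with its boundary data. The concrete route I would take is the Bergman approximation used in \cite{PS07} and \cite{RWN11}: for each $k$ replace $\psi_\lambda$ by the partial Bergman weight built from an orthonormal basis of $W_{\lambda,k}$, piecewise constant in $\lambda$ with jumps at the normalized weights, whose Legendre transform $\varphi^{(k)}_t$ is an explicit geodesic in the finite-dimensional symmetric space of Bergman metrics, so that $\MA(\Psi^{(k)})=0$ stratum by stratum. I would then show $\psi^{(k)}_\lambda\to\psi_\lambda$ and $\varphi^{(k)}_t\to\varphi_t$, using Demailly's regularization together with the envelope definition of $P_{W_\lambda}\varphi$, and transfer $\MA=0$ to the limit via the continuity of the Monge–Amp\`ere operator in the bounded-locus framework of Section \ref{Monge Ampere operator} (\cite{BT76}, \cite{BEGZ10}). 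The delicate points are that the weights carry only small unbounded loci, that the convergence must be controlled uniformly away from the pluripolar singular set, and that convergence of the transforms has to be upgraded to convergence of the mixed Monge–Amp\`ere masses; here the comparison principle, Theorem \ref{comparison theorem}, is what guarantees that no mass is lost in the limit, so that $\Psi$ inherits maximality and hence $\MA(\Psi)=0$.

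Finally, to identify $\varphi_t-F_0$ with the Phong–Sturm ray I would argue by uniqueness. The Phong–Sturm potential $\Psi^{\mathrm{PS}}$ is, by its defining property in \cite{PS07}, a bounded $S^1$-invariant solution of $\MA(\Psi^{\mathrm{PS}})=0$ on the punctured disc with the same boundary value $\varphi$ at $\abs{\tau}=1$; both it and $\Psi$ are $\mathbb{C}^*$-equivariant for the action of the test configuration, so near $\tau=0$ they carry the same prescribed singularity type, dictated by the identical filtration $\mathcal{F}_\lambda H^0(X,L^{\otimes k})$ and hence the same $\mathbb{C}^*$-weights. Two such solutions of the Dirichlet problem for the homogeneous Monge–Amp\`ere equation with equal boundary data and equal growth coincide, again by the comparison principle. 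The only discrepancy is the global additive normalization: the Phong–Sturm construction centres the weights by their total average whereas the equilibrium construction does not, and the difference of the two centrings is exactly the leading coefficient $F_0=b_0/a_0$ of $w(k)/(kN_k)$. Tracking this constant through the Legendre transform yields $\varphi_t-F_0=\Psi^{\mathrm{PS}}$, completing the identification.
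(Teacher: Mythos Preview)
This theorem is not proved in the present paper; it is quoted from \cite{RWN11}, and the paper only records a two-line sketch of the mechanism. That sketch, however, points to a different route from the one you outline. The paper says geodesicity is \emph{derived from the maximality} of the equilibrium weights, namely the identity
\[
\psi_\lambda=\varphi \quad \text{a.e.\ with respect to } \MA(\psi_\lambda),
\]
and that the technical work in \cite{RWN11} lies in establishing this maximality (from the fact that $\psi_\lambda$ is an upper envelope of sufficiently many algebraic weights). Once one has this, the Legendre duality you yourself state in your first paragraph finishes the job directly: maximal test curves transform to genuine geodesics, so $\MA(\Psi)=0$ follows without any approximation or limit-passing in the Monge--Amp\`ere operator.

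Your ``concrete route'' instead proves $\MA(\Psi)=0$ by Bergman approximation: build finite-dimensional Bergman geodesics $\varphi_t^{(k)}$, for which $\MA(\Psi^{(k)})=0$ holds, and then pass to the limit. This is legitimate and is essentially the Phong--Sturm construction, but it is more indirect for the geodesicity assertion and, as you acknowledge, the limit step is delicate (uniform control away from the pluripolar set, no loss of mass). The maximality route bypasses all of this: it isolates a single pointwise property of the envelope $\psi_\lambda$ and feeds it into a clean Legendre-duality statement. What your approximation approach buys is that the identification with the Phong--Sturm ray comes almost for free, since you are literally building the same limit; by contrast, in the maximality approach the identification (Theorem~9.2 of \cite{RWN11}) is a separate step.

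On that last point, your uniqueness argument is plausible in outline but underspecified: you invoke ``same prescribed singularity type near $\tau=0$'' dictated by the filtration, but making this precise as a boundary condition for the homogeneous Monge--Amp\`ere equation on the punctured disc is exactly the content one has to supply. Also, watch the bookkeeping: you write $\varphi_t-F_0=\Psi^{\mathrm{PS}}$, conflating the ray $t\mapsto\varphi_t$ with the $(n{+}1)$-variable potential $\Psi$.
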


It is immediate to show that $\varphi_t$ is a bounded psh weight emanating from $\varphi$ and that it is convex with respect to $t$. 
The geodecity is derived from the maximality of $P_{W_{\lambda}}\varphi$, that is, 
\begin{equation}\label{maximality}
\psi_{\lambda} =\varphi \ \ \text{a.e. with respect to} \ \MA(\psi_{\lambda}). 
\end{equation}
And one of the technical point in \cite{RWN11} is to show (\ref{maximality}). Such property is caused by the fact that $\psi_{\lambda}$ is defined as the upper envelops of sufficiently many algebraic weights. 

Note that the inverse Legendre transform maps $\varphi_t$ to $\psi_{\lambda}$ by 
\begin{equation}\label{Legendre}
\psi_{\lambda}= \inf_t \big\{ \varphi_t -t\lambda \big\} 
\end{equation}
which holds on almost every point on $X$. Therefore the two curves have the equivalent information. 
Fix $t\in[0, \infty)$. By the convexity of $\varphi_t$ in $t$, the right derivative $\dot{\varphi_t}(x)$ is defined for every $x \in X$. We identify this right derivative with the tangent vector of the weak geodesic. Moreover, the gradient map relation 
\begin{equation}\label{gradient map relation}
-\psi_{\lambda}(x) + \varphi_t(x) = t \lambda
\end{equation}
holds almost everywhere if one set $\lambda := \dot{\varphi}_t(x)$.

\subsection{Proof of Theorem \ref{main2}}\label{proof of theorem main2}
Now we prove Theorem \ref{main2}. It was shown in \cite{WN10} that the push-forward of the Lebesgue measure by the concave function $G[\mathcal{T}]$ on the Okounkov body $\Delta(L)$ gives the weak limit. That is,  
\begin{equation}\label{WN}
\lim_{k \to \infty} \frac{n!}{k^n}\sum_{\lambda}\delta_{\frac{k}{\lambda}} \dim V_{\lambda} = n!G[\mathcal{T}]_*(d\lambda|_{\Delta(L)}). 
\end{equation}
Recall that $G[\mathcal{T}]$ is characterized by its property: $G[\mathcal{T}]^{-1}([\lambda, \infty)) = \Delta(W_{\lambda})$ where $\Delta(W_{\lambda}) \subseteq \mathbb{R}^n$ is the Okounkov body of $W_{\lambda}$ in the sense of \cite{LM08}, Definition 1.15 and $n!$ times the Euculidian volume $\vol(\Delta(W_{\lambda}))$ gives $\vol(W_{\lambda})$.
Therefore it is easy to observe that the right hand side of (\ref{WN}) equals to $-d(\vol(W_{\lambda}))$. Then, by Theorem \ref{main1} with Corollary \ref{birational}, we may reduce the proof of Theorem \ref{main2} to show  
\begin{equation}\label{target}
 -d\int_X\MA(\psi_{\lambda})=(\dot{\varphi}_t)_*\MA(\varphi_t). 
\end{equation}
Here we used the assumption $\mathcal{X}$ is normal, in order to apply Corollary \ref{birational}. 
By the main result of \cite{PS10} $\varphi_t$ has the $C^{1, \alpha}$-regularity so that we can apply Proposition $2.2$ of \cite{Bern09}. Then it can be seen that the right hand side of (\ref{target}) is independent of $t$ and the proof is reduced to the case $t=0$. Then by basic measure theory we conclude Theorem \ref{main2} if for any $\lambda \in \mathbb{R}$
\begin{equation}\label{point}
 \int_X \MA(\psi_{\lambda}) = \int_{\{\dot{\varphi}_0 \geq \lambda\}} \MA(\varphi)
\end{equation}
holds. Or it is sufficient to show 
\begin{equation}\label{point'}
\int_{\{\dot{\varphi}_0 > \lambda\}} \MA(\varphi) \leq \int_X \MA(\psi_{\lambda}) \leq \int_{\{\dot{\varphi}_0 \geq \lambda\}} \MA(\varphi) 
\end{equation}
for any $\lambda \in \mathbb{R}$. 
The following lemma is directly deduced from the definition of $\varphi_t$. 
\begin{lem}
For almost every point in $X$, $\dot{\varphi}_0 \geq \lambda$ holds if and only if $\psi_{\lambda}=\varphi$. In particular 
\begin{equation*}
\int_{\{\dot{\varphi}_0 \geq \lambda\}}\MA(\varphi)=\int_{\{\psi_{\lambda}=\varphi\}}\MA(\varphi). 
\end{equation*}
holds. 
\end{lem}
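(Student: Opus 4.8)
The plan is to unwind the definitions of $\varphi_t$ and $\psi_\lambda$ via the Legendre duality relations recalled just above. Recall that $\varphi_t = \soup\{\psi_\mu + t\mu \mid \mu \in \mathbb{R}\}$, so that $\varphi_0 = \soup\{\psi_\mu \mid \mu \in \mathbb{R}\}$. Since $\psi_\mu$ is decreasing in $\mu$ and equals $\varphi$ for $\mu < \lambda_0$, we have $\varphi_0 = \varphi$ (up to the usc regularization, which changes nothing off a pluripolar set). The right derivative at $t=0$ is $\dot\varphi_0(x) = \lim_{t\to 0^+}\tfrac{1}{t}(\varphi_t(x) - \varphi(x))$. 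Because $\varphi_t$ is the upper envelope of the affine-in-$t$ functions $t \mapsto \psi_\mu(x) + t\mu$, the slope $\dot\varphi_0(x)$ is, for a.e.\ $x$, the supremum of those $\mu$ for which the line $t\mapsto \psi_\mu(x) + t\mu$ is "active" at $t = 0$, i.e.\ for which $\psi_\mu(x) = \varphi(x)$ (using $\varphi_0 = \varphi$ and relation (\ref{Legendre}), $\psi_\mu = \inf_t\{\varphi_t - t\mu\}$, so $\psi_\mu(x) = \varphi(x)$ exactly when the infimum is attained at $t=0$, which is the condition that the right slope at $0$ is $\geq \mu$). This is the content to extract carefully.

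First I would make precise the elementary one-variable convexity fact: for a convex function $g(t) = \varphi_t(x)$ on $[0,\infty)$ with $g(0) = \varphi(x)$, written as $g(t) = \sup_\mu\{\psi_\mu(x) + t\mu\}$ with $\mu \mapsto \psi_\mu(x)$ concave, the right derivative $g'(0^+)$ equals $\sup\{\mu \mid g(0) = \psi_\mu(x) + 0\cdot\mu\} = \sup\{\mu \mid \psi_\mu(x) = \varphi(x)\}$. Since $\mu \mapsto \psi_\mu(x)$ is decreasing and concave, the set $\{\mu : \psi_\mu(x) = \varphi(x)\}$ is an interval of the form $(-\infty, \mu_0(x)]$ or $(-\infty, \mu_0(x))$, and $\dot\varphi_0(x) = \mu_0(x)$. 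Hence $\dot\varphi_0(x) \geq \lambda$ iff $\psi_\lambda(x) = \varphi(x)$ whenever $\lambda$ lies strictly inside this interval, and the two conditions can differ only at the single endpoint value $\lambda = \mu_0(x)$ for each $x$. The only subtlety is to verify that these a.e.-defined statements (all of $\varphi_0 = \varphi$, relation (\ref{Legendre}), and the concavity/monotonicity of $\lambda \mapsto \psi_\lambda$) hold on a common full-measure set with respect to $\MA(\varphi)$, which follows since each exceptional set is pluripolar and $\MA(\varphi)$ puts no mass on pluripolar sets (it is the non-pluripolar Monge--Amp\`ere measure of a bounded weight; cf.\ the Remark after the definition of $\MA$).

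I would then conclude: off a pluripolar set, $\{\dot\varphi_0 \geq \lambda\} = \{\psi_\lambda = \varphi\}$, possibly differing on the fibre-wise endpoint set $\{x : \mu_0(x) = \lambda\}$; but even there the inclusion $\{\psi_\lambda = \varphi\} \subseteq \{\dot\varphi_0 \geq \lambda\}$ is exact (if $\psi_\lambda(x) = \varphi(x)$ then $\lambda$ is in the closed interval $(-\infty,\mu_0(x)]$, so $\mu_0(x) \geq \lambda$). For the reverse direction up to a null set, note that $\{\dot\varphi_0 \geq \lambda\} \setminus \{\psi_\lambda = \varphi\} \subseteq \{x : \dot\varphi_0(x) = \lambda,\ \psi_\lambda(x) < \varphi(x)\}$; this is contained in the set where $\lambda$ is exactly the right endpoint but is not attained, and one argues this has $\MA(\varphi)$-measure zero either by monotone continuity of $\mu \mapsto \int_X \MA(\psi_\mu)$ (using Theorem \ref{comparison theorem} and the fact that $\psi_\mu$ increases to $\varphi$ as $\mu \downarrow \mu_0(x)$ pointwise) or, more simply, because for each fixed $x$ it is a set of $\lambda$-measure zero and a Fubini argument over $(x,\lambda)$ transfers this to an $x$-null set for generic $\lambda$ — and for the remaining countably-many bad $\lambda$ one invokes left-continuity of both sides in $\lambda$. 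The displayed equality $\int_{\{\dot\varphi_0 \geq \lambda\}}\MA(\varphi) = \int_{\{\psi_\lambda = \varphi\}}\MA(\varphi)$ then follows immediately.

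The main obstacle I anticipate is not any deep estimate but the careful bookkeeping of which identities hold \emph{everywhere} versus \emph{almost everywhere} (with respect to which measure), and handling the endpoint value $\lambda = \mu_0(x)$ uniformly in $x$; the pluripolar-null property of $\MA(\varphi)$ and the left-continuity in $\lambda$ are the tools that make this harmless, but they must be invoked explicitly.
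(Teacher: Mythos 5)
Your proposal is correct, and its essential content coincides with the paper's proof: the parenthetical observation in your first paragraph --- that $\psi_{\mu}(x)=\varphi(x)$ holds exactly when the infimum in $\psi_{\mu}=\inf_t\{\varphi_t-t\mu\}$ is attained at $t=0$, which by convexity of $t\mapsto\varphi_t(x)$ is exactly the condition $\dot{\varphi}_0(x)=\inf_{t>0}\frac{\varphi_t(x)-\varphi(x)}{t}\geq\mu$ --- is already the whole argument, valid for a.e.\ $x$ (namely wherever the inverse Legendre relation (\ref{Legendre}) holds), and this is precisely how the paper argues. Everything after that point in your write-up is superfluous: there is no endpoint discrepancy to handle, because the implication $\dot{\varphi}_0(x)\geq\lambda\Rightarrow\psi_{\lambda}(x)=\varphi(x)$ follows directly from $\varphi_t(x)\geq\varphi(x)+t\dot{\varphi}_0(x)\geq\varphi(x)+t\lambda$ combined with (\ref{Legendre}), even in the borderline case $\dot{\varphi}_0(x)=\lambda$; thus the fibrewise interval $\{\mu \mid \psi_{\mu}(x)=\varphi(x)\}$ is automatically closed at its right endpoint for a.e.\ $x$, and the Fubini/left-continuity detour is not needed (and, as sketched, Fubini would only give a Lebesgue-null, not countable, set of bad $\lambda$). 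Since $\varphi$ is smooth and strictly psh, $\MA(\varphi)$ is a volume form, so the a.e.\ equivalence of the two sets immediately gives the displayed equality of integrals.
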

\begin{proof}
Let $x$ be a point of $X$. If $\psi_{\lambda}(x)=\varphi(x)$, then 
\begin{equation*}
\dot{\varphi_0}(x):= \inf_t \frac{\varphi_t(x)- \varphi(x)}{t} 
\geq \frac{\psi_{\lambda}(x) + t\lambda -\varphi(x)}{t}
\geq \lambda. 
\end{equation*}
On the other hand, by the Legendre relation (\ref{Legendre}), $\dot{\varphi_0}(x) \geq \lambda$ yields
\begin{align*}
\psi_{\lambda}(x) & = \inf_t \big\{ \varphi_t(x) -t\lambda \big\} \\
& \geq \inf_t \big\{ t\dot{\varphi_0}(x) + \varphi(x) -t\lambda \big\}
\geq \varphi(x)
\end{align*}
for almost every $x\in X$. 
 
\end{proof}

In the case of Example \ref{RT}, the result of \cite{Berm07} yileds much stronger conclusion that $\psi_{\lambda}$ has $C^{1, 1}$-regularity on the bounded locus and 
\begin{equation*}
\MA(\psi_{\lambda}) = \mathrm{1}_{\{\psi_{\lambda}=\varphi\}}\MA(\varphi)
\end{equation*}
holds. Here, however, we give a proof of  (\ref{point'}) without the regularity of $\psi_{\lambda}$. Note that the set $\{ \dot{\varphi}_0 > \lambda \}$ is open (thanks to the regularity result of \cite{PS10}) and contained in $\{\psi_{\lambda} = \varphi\}$. It was shown by \cite{BEGZ10} that the Monge--Amp\`{e}re product is local in the plurifine topology. Therefore we have 
\begin{equation*}
\int_{\{\dot{\varphi}_0 > \lambda \}} \MA(\psi_{\lambda}) = \int_{\{\dot{\varphi}_0 > \lambda \}} \MA(\varphi). 
\end{equation*}
Then we obtain the one side inequality of (\ref{point'}), 
\begin{equation*}
\int_X \MA(\psi_{\lambda}) \geq \int_{\{\dot{\varphi}_0 > \lambda\}} \MA(\varphi). 
\end{equation*}

Let us take any $\varepsilon>0$ to prove the converse inequality. Thanks to the maximality (\ref{maximality}) we have 
\begin{align*}
\int_X \MA(\psi_{\lambda}) 
=\int_{\{\psi_{\lambda}>\varphi-\varepsilon\}} \MA(\psi_{\lambda})
= \int_{\{\psi_{\lambda}>\varphi-\varepsilon\}} \MA(\max{\{\psi_{\lambda}, \varphi-\varepsilon\}}). 
\end{align*}
Note that the set $\{\psi_{\lambda}>\varphi-\varepsilon\}$ is pluri-open. 
The right hand side equals to 
\begin{align*}
L^n - \int_{\{\psi_{\lambda} \leq \varphi-\varepsilon\}} \MA(\max{\{\psi_{\lambda}, \varphi-\varepsilon\}})
\end{align*}
by Theorem \ref{comparison theorem}. Therefore we obtain 
\begin{align*}
\int_X \MA(\psi_{\lambda}) 
&\leq L^n - \int_{\{\psi_{\lambda} < \varphi-\varepsilon\}} \MA(\max{\{\psi_{\lambda}, \varphi-\varepsilon\}}) \\
& = L^n - \int_{\{\psi_{\lambda} < \varphi-\varepsilon\}} \MA(\varphi). 
\end{align*}
If $\varepsilon>0$ tends to $0$ then the set $\{\psi_{\lambda} < \varphi-\varepsilon\}$ converges to $\{\dot{\varphi}_0 <\lambda \}$ hence 

\begin{equation*}
\int_X \MA(\psi_{\lambda}) \leq \int_{\{\dot{\varphi}_0 \geq \lambda\}} \MA(\varphi). 
\end{equation*}

This ends the proof. 


\subsection{Norms on the weak geodesic ray}\label{Norms on the weak geodesic ray}

We conclude this paper by discussing some consequences of Theorem \ref{main2}, which are concerned with the $p$-norm of test configuration. 
\begin{dfn}
Fix any test configuration $(\mathcal{X}, \mathcal{L})$ of a polarized manifold $L$. Let $H^0(\mathcal{X}_0, \mathcal{L}_0^{\otimes k})=\bigoplus_{\lambda}V_{\lambda}$ be the weight decomposition of the induced $\mathbb{C^*}$-action. Define the trace-free part of each eigenvalue as 
\begin{equation*}
\bar{\lambda}:= \lambda - \frac{1}{N_k}\sum_{\lambda}\lambda \dim V_{\lambda}
\end{equation*}
and introduce the $p$-norms ($p\in\mathbb{Z}_{\geq0}$) of the test configuration by 
\begin{equation*}
Q_p := \lim_{k \to \infty} \frac{1}{k^n}\sum_{\lambda} \bigg(\frac{\lambda}{k}\bigg)^p\dim V_{\lambda} 
\end{equation*}
and 
\begin{equation*}
N_p := \lim_{k \to \infty} \frac{1}{k^n}\sum_{\lambda} \bigg(\frac{\bar{\lambda}}{k}\bigg)^p\dim V_{\lambda}. 
\end{equation*}
Especially in the case $p=2$ we denote $Q_2$ and $N_2$ by $Q$ and $\norm{\mathcal{T}}^2=\norm{\mathcal{T}}_2^2$. Note that the limits exist since the summations in the right-hand side can be thought as the appropriate Hilbert polynomial. 
\end{dfn}
It is easy to see that $Q_1=b_0$, $N_1=0$, ${N_2}=Q_2-\frac{{b_0}^2}{a_0}$, and 
\begin{equation*}
\frac{1}{N_k}\sum_{\lambda} \frac{\lambda}{k}  \to F_0=\frac{b_0}{a_0}. 
\end{equation*}
These norms are introduced by \cite{Don05} and played the important role in their result for the lower bound of the Calabi functional. 
We can obtain the geometric meanings of these norms in word of weak geodesic ray. 
\begin{thm}\label{main3'}
Let $(\mathcal{X}, \mathcal{L})$ be a test configuration and $\varphi_t$ be the weak geodesic associated to $(\mathcal{X}, \mathcal{L})$. Then we have 
\begin{equation*}
Q_p = \int_X (\dot{\varphi_t})^p \frac{\MA(\varphi_t)}{n!} 
\end{equation*}
and 
\begin{equation*}
N_p = \int_X \bigg(\dot{\varphi_t}- F_0\bigg)^p \frac{\MA(\varphi_t)}{n!}. 
\end{equation*}
\end{thm}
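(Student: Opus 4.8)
The plan is to derive Theorem~\ref{main3'} (equivalently Theorem~\ref{main3}) directly from Theorem~\ref{main2} by integrating monomials against the weak-limit measure. First I would observe that Theorem~\ref{main2} can be restated as the weak convergence of probability-type measures: the normalized spectral measure
\begin{equation*}
\nu_k := \frac{n!}{k^n}\sum_{\lambda}\delta_{\frac{\lambda}{k}}\dim V_{\lambda}
\end{equation*}
converges weakly to $(\dot{\varphi}_t)_*\MA(\varphi_t)$ for every $t\geq 0$. For a continuous function $g$ on $\mathbb{R}$ this gives $\int g\,d\nu_k \to \int_X g(\dot{\varphi}_t)\,\MA(\varphi_t)$. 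Applying this with $g(s)=s^p$ (which is continuous, in fact a polynomial) immediately yields
\begin{equation*}
\lim_{k\to\infty}\frac{n!}{k^n}\sum_{\lambda}\Bigl(\frac{\lambda}{k}\Bigr)^p\dim V_{\lambda}
= \int_X(\dot{\varphi}_t)^p\,\MA(\varphi_t),
\end{equation*}
which is exactly $n!\,Q_p = n!\int_X(\dot{\varphi}_t)^p\,\MA(\varphi_t)/n!$, i.e. the first formula. The existence of the limit $Q_p$ is not even in question here since, as already remarked in the Definition, the left-hand sum is a Hilbert polynomial in $k$; what Theorem~\ref{main2} supplies is the identification of the top coefficient with the geometric integral.

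For the trace-free statement I would write $\bar\lambda/k = \lambda/k - c_k$ where $c_k := \frac{1}{N_k}\sum_{\lambda}\lambda\dim V_{\lambda} \to F_0$, using the elementary fact $\frac{1}{N_k}\sum_{\lambda}\frac{\lambda}{k}\to b_0/a_0 = F_0$ recorded just before the theorem. Then expand
\begin{equation*}
\frac{1}{k^n}\sum_{\lambda}\Bigl(\frac{\bar\lambda}{k}\Bigr)^p\dim V_{\lambda}
= \sum_{j=0}^{p}\binom{p}{j}(-c_k)^{p-j}\frac{1}{k^n}\sum_{\lambda}\Bigl(\frac{\lambda}{k}\Bigr)^j\dim V_{\lambda}.
\end{equation*}
Each inner sum, divided by $k^n$, converges (after multiplying by $n!$) to $\int_X(\dot{\varphi}_t)^j\MA(\varphi_t)$ by the first part, and $c_k\to F_0$, so the whole expression converges to $\sum_{j=0}^p\binom{p}{j}(-F_0)^{p-j}\int_X(\dot{\varphi}_t)^j\MA(\varphi_t)/n! = \int_X(\dot{\varphi}_t-F_0)^p\MA(\varphi_t)/n!$ by the binomial theorem under the integral sign. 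This proves the second formula, and Theorem~\ref{main3} follows by the same argument applied to $g(s)=|s-F_0|^p$, which is continuous (here one notes $\norm{\mathcal{T}}_p$ as defined in Theorem~\ref{main3} is $Q_p$-type normalization of $\int|\dot\varphi_t-F_0|^p$; since $\MA(\varphi_t)$ has total mass $L^n = n!a_0$, the constant $F_0$ appearing in both places is consistently $b_0/a_0$).

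The only genuinely delicate point is justifying passage to the limit against the \emph{unbounded} test functions $s\mapsto s^p$ and $s\mapsto|s-F_0|^p$: weak convergence of measures a priori only controls bounded continuous test functions. Here the resolution is Lemma~\ref{linearly bounded}: all the measures $\nu_k$ are supported in the fixed compact interval $[-C,C]$ (equivalently $\lambda_0 \leq \lambda/k \leq \lambda_c$ up to rounding), and the limit measure $(\dot\varphi_t)_*\MA(\varphi_t)$ is likewise supported there since $\dot\varphi_t$ takes values in $[\lambda_0,\lambda_c]$. On a fixed compact set every polynomial is a uniform limit of... simply, on a compact set weak convergence does extend to all continuous functions. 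So the main obstacle is really just bookkeeping the uniform support bound, which Lemma~\ref{linearly bounded} hands us for free; after that the proof is a one-line application of Theorem~\ref{main2} plus the binomial expansion. I would also remark that this gives, as a byproduct, the $t$-independence of $Q_p$ and $N_p$ that is implicit in the statement, since the left-hand sides manifestly do not involve $t$ while Theorem~\ref{main2} shows the right-hand side is $t$-independent.
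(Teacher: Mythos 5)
Your proof is correct and follows essentially the same route as the paper: both reduce the theorem to taking $p$-th moments of the two measures identified in Theorem \ref{main2}. The only difference is in how the limit of moments is justified --- the paper re-runs the Lebesgue--Stieltjes integration-by-parts computation of Theorem \ref{b_0} to get $n!\,Q_p=-\int_{-\infty}^{\infty}\lambda^p\,d(\vol(W_{\lambda}))$ directly, whereas you invoke weak convergence together with the uniform compact support supplied by Lemma \ref{linearly bounded}; both are valid, and your binomial-expansion treatment of the trace-free part (modulo the harmless typo of omitting the factor $1/k$ in your $c_k$) spells out a step the paper dismisses as ``the same way''.
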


\begin{proof}
By the same argument in the proof of Theorem \ref{b_0}, we obtain 
\begin{equation*}
n! Q_p = -\int_{-\infty}^{\infty} \lambda^p d\vol(W_{\lambda}). 
\end{equation*}
This can be also obtained from the result of \cite{WN10} if one note the volume characterization of the concave function $G[\mathcal{T}]$ in \cite{WN10}. 
Taking the $p$-th moment of the two measures in Theorem \ref{main2}, we deduce the claim. The formulas for $N_p$ can be proved in the same way.  
\end{proof}
Let us examine Theorem \ref{main3'}. in the case $p=0$ it only states that $n!a_0=\int_X \MA(\varphi_t)$ and this can be easily seen from the definition of the Bedford--Taylor's Monge--Amp\`{e}re product. The case $p=1$ yields 
\begin{equation*}
n!b_0 = \int_X \dot{\varphi_t} \MA(\varphi_t). 
\end{equation*}
In other words, the Aubin--Mabuchi energy functional along the weak geodesic is given by 
\begin{equation*}
\mathcal{E}(\varphi_t, \varphi) := \int_0^1 dt \int_X \dot{\varphi_t}\MA(\varphi_t)=n!b_0t. 
\end{equation*}
(For the definition of the Aubin--Mabuchi energy of a singular Hermitian metric, see \cite{BEGZ10}.) This is a well-known result to the experts. For example, the proof of \cite{Berm12} in the Fano case works exactly the same way to yield that along the weak geodesic $b_0$ gives the gradient of the Aubin--Mabuchi energy. We have reproved it in the viewpoint of the associated family of graded linear series. It is conjectured that the gradient of the K-energy at infinity corresponds to the (minus of) Donaldson--Futaki invariant. This gives the variational approach to the existence problem. 

The most interesting case is $p=2$ which yields a part of Theorem \ref{main3} and this might be a new result. In particular, we obtain the following. 
\begin{cor}\label{trivial} 
For any test configuration, the norm $\norm{\mathcal{T}}$ is zero if and only if the associated weak geodesic ray $\varphi_t$ is $\varphi +F_0 t$. 
\end{cor}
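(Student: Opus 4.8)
The plan is to read off the statement directly from Theorem~\ref{main3'} with $p=2$. By that theorem applied at any $t$ (say $t=0$), we have $N_2 = \int_X(\dot\varphi_t - F_0)^2\,\MA(\varphi_t)/n!$, i.e. $\norm{\mathcal{T}}^2 = \int_X(\dot\varphi_0-F_0)^2\,\MA(\varphi)/n!$. Since the integrand is nonnegative and $\MA(\varphi)$ is a positive measure of total mass $n!a_0 > 0$, the vanishing $\norm{\mathcal{T}}=0$ is equivalent to $\dot\varphi_0 = F_0$ holding $\MA(\varphi)$-almost everywhere on $X$.

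Next I would upgrade this almost-everywhere identity to a genuine identification of the weak geodesic. The first step is to note that $\dot\varphi_0 \equiv F_0$ a.e.\ (w.r.t. $\MA(\varphi)$, hence on a set of full Lebesgue measure since $\varphi$ is smooth and strictly psh so $\MA(\varphi)$ is a smooth volume form) forces, via the Legendre relation~(\ref{Legendre}) $\psi_\lambda = \inf_t\{\varphi_t - t\lambda\}$ together with the gradient map relation~(\ref{gradient map relation}), that $\psi_\lambda = \varphi$ for $\lambda \leq F_0$ and $\psi_\lambda = -\infty$ for $\lambda > F_0$, almost everywhere. Taking upper semicontinuous regularizations, $\psi_\lambda = \varphi$ for all $\lambda < F_0$ and $\psi_\lambda \equiv -\infty$ for $\lambda > F_0$; in particular $\lambda_0 = \lambda_c = F_0$ (the geodesic degenerates to a single point of the Legendre picture). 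Plugging this $\psi_\lambda$ back into the definition~$\varphi_t = \sup^*\{\psi_\lambda + t\lambda\}$ gives $\varphi_t = \varphi + F_0 t$ for every $t \geq 0$. The converse direction is immediate: if $\varphi_t = \varphi + F_0 t$ then $\dot\varphi_t = F_0$ identically, so by Theorem~\ref{main3'} $\norm{\mathcal{T}}^2 = \int_X 0 \cdot \MA(\varphi)/n! = 0$.

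I would organize the write-up as: (i) invoke Theorem~\ref{main3'} with $p=2$; (ii) observe nonnegativity of the integrand and positivity of $\MA(\varphi)$ to get the a.e.\ statement; (iii) use the Legendre/gradient-map relations of Section~\ref{Construction of weak geodesic} to pass from the a.e.\ statement on $\dot\varphi_0$ to the identification $\psi_\lambda = \varphi$ for $\lambda < F_0$ and $\psi_\lambda = -\infty$ for $\lambda > F_0$; (iv) conclude $\varphi_t = \varphi + F_0 t$ by the defining Legendre transform; (v) note the trivial converse.

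The main obstacle I anticipate is step~(iii): the passage from ``$\dot\varphi_0 = F_0$ a.e." to the pointwise identification of $\psi_\lambda$, because $\psi_\lambda$ is only known to be defined as an upper envelope (u.s.c.\ regularized) and the Legendre relations~(\ref{Legendre}) and~(\ref{gradient map relation}) hold only almost everywhere. One must be careful that an a.e.\ equality of bounded psh functions propagates to everywhere (which is fine, since a psh function is determined by its values off a pluripolar — indeed measure-zero — set), and that the u.s.c.\ regularization does not spoil $\psi_\lambda = \varphi$ for $\lambda < F_0$ (it does not, since $\varphi$ is already continuous and $\psi_\lambda \leq \varphi$ always, with equality a.e.\ giving equality everywhere by regularization from below is automatic here because $\varphi$ itself is a competitor). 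This is a routine but slightly delicate pluripotential bookkeeping, and it is the only place where real care is needed.
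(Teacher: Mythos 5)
Your starting point and the converse direction are fine, and they match the paper's treatment: the corollary is stated there with no separate argument, as an immediate consequence of Theorem~\ref{main3'} with $p=2$. The forward direction of your write-up, however, has a genuine gap at step~(iii). From $\norm{\mathcal{T}}=0$ and the $t=0$ instance of Theorem~\ref{main3'} you obtain only that $\dot{\varphi}_0=F_0$ almost everywhere, i.e.\ only the right derivative at $t=0$ of the convex function $t\mapsto\varphi_t(x)$. This yields the lower bound $\varphi_t\geq\varphi+tF_0$ but says nothing about $\varphi_t(x)$ for larger $t$: a convex function is not determined by its value and one-sided derivative at a single point. Concretely, your claim that $\psi_{\lambda}\equiv-\infty$ for $\lambda>F_0$ does not follow from (\ref{Legendre}) and (\ref{gradient map relation}). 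By (\ref{Legendre}), $\psi_{\lambda}(x)=\inf_t\{\varphi_t(x)-t\lambda\}$ is $-\infty$ for $\lambda>F_0$ precisely when the asymptotic slope $\lim_{t\to\infty}\dot{\varphi}_t(x)$ is at most $F_0$; combined with $\dot{\varphi}_0(x)=F_0$ and the monotonicity of $t\mapsto\dot{\varphi}_t(x)$, that is exactly the statement $\dot{\varphi}_t(x)=F_0$ for all $t$, i.e.\ the conclusion you are trying to prove. The only inequality you actually have, $\varphi_t\geq\varphi+tF_0$, goes the wrong way for evaluating this infimum.

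The repair is to use the full strength of the preceding results: Theorem~\ref{main3'} holds for every $t$ (equivalently, by Theorem~\ref{main2} the measure $(\dot{\varphi}_t)_*\MA(\varphi_t)$ is independent of $t$), so $\norm{\mathcal{T}}=0$ forces $\dot{\varphi}_t=F_0$ $\MA(\varphi_t)$-a.e.\ for \emph{every} $t\geq0$, not just $t=0$. Even then some care remains, since for $t>0$ the measure $\MA(\varphi_t)$ need not be absolutely continuous with respect to Lebesgue measure, so the a.e.\ statement does not immediately propagate pointwise. A cleaner route is to work on the $\lambda$-side: vanishing of the variance of the positive measure $-d\vol(W_{\lambda})$ (total mass $n!a_0$, barycenter $F_0$) gives $\vol(W_{\lambda})=L^n$ for $\lambda<F_0$ and $\vol(W_{\lambda})=0$ for $\lambda>F_0$; the first statement, via Theorem~\ref{main1}, the maximality (\ref{maximality}) and a domination argument, upgrades to $\psi_{\lambda}=\varphi$ for $\lambda<F_0$, and one must still argue separately that the series $W_{\lambda}$ with $\lambda>F_0$ --- which have zero volume but need not be zero, so $\psi_{\lambda}$ need not be identically $-\infty$ --- do not contribute to the supremum defining $\varphi_t$. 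In short, your steps (i), (ii) and (v) are correct, but (iii)--(iv) as written establish only $\varphi_t\geq\varphi+tF_0$.
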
 

Only the case where the exponent $p$ is even was treated in \cite{Don05} to assure the positivity of the norm but now we may define the positive norm for odd $p$ integrating the function $\abs{\lambda}^p$, in place of $\lambda^p$, by each measure. In particular we can see that the limit 
\begin{equation*}
\norm{\mathcal{T}}_p^p := \lim_{k \to \infty} \frac{1}{k^n}\sum_{\lambda} \bigg(\frac{\abs{\bar{\lambda}}}{k}\bigg)^p\dim V_{\lambda}, 
\end{equation*}
which can not necessarily be described by a Hilbert polynomial, exists and coincide with the $L^p$ norm of the tangent vector. Thus Theorem \ref{main3} was proved. Letting $p \to +\infty$, we obtain 
\begin{equation}
\norm{\mathcal{T}}_{\infty}:= \lim_{p \to \infty} \norm{\mathcal{T}}_p = \sup_X \abs{\dot{\varphi}_t-F_0}.  
\end{equation} 
In particular the right hand side is independent of $t$ and $\varphi$. 

Let us remark some relation with \cite{Don05} and prove Theorem \ref{main4}. Let us denote the scalar curvature of the K\"{a}hler metric $dd^c\varphi$ by $S_{\varphi}$ and denote its mean value by $\hat{S}$. The main result of \cite{Don05} states that 
\begin{equation*}
(Q_p)^{\frac{1}{p}}\norm{S_{\varphi}}_{L^q} \geq b_1
\end{equation*}
and 
\begin{equation}\label{lower bound of the Calabi functional}
\norm{\mathcal{T}}_p\cdot\norm{S_{\varphi}-\hat{S}}_{L^q} \geq F_1 
\end{equation}
hold for any even $p$ and the conjugate $q$ which satisfies $1/p+1/q=1$. 
As a result one can see that the existence of constant scalar curvature K\"{a}hler metric implies K-semistability. In view of (\ref{lower bound of the Calabi functional}), \cite{Sz11} suggested the stronger notion of K-stability which implies 
\begin{equation}\label{strong K-stability}
F_1 \leq -\delta \norm{\mathcal{T}}
\end{equation}
for some uniform constant $\delta>0$. One of the motivation of this condition is that one has to consider some limit of test configurations to assure the existence of constant scalar curvature K\"{a}hler metric. 
The above condition also excludes the pathological example raised in \cite{LX11}. 
Corollary \ref{trivial} supports the validity of \cite{Sz11}'s suggestion since the gradient of the K-energy along the trivial ray $\varphi + F_0 t$ is zero. 

Let us give an energy theoretic explanation for (\ref{lower bound of the Calabi functional}). Thanks to Theorem \ref{main3}, we can apply the H\"{o}lder  inequality to obtain 
\begin{equation}\label{Holder}
\bigg(\int_X \abs{\dot{\varphi_0}-F_0}^p \frac{\MA(\varphi)}{n!}\bigg)^{\frac{1}{p}} \bigg(\int_X \abs{S_{\varphi}-\hat{S}}^q \frac{\MA(\varphi)}{n!}\bigg)^{\frac{1}{q}}  
\geq \int_X (\dot{\varphi}_0-F_0)(S_{\varphi}-\hat{S}) \frac{\MA(\varphi)}{n!} 
\end{equation}
for any pair $(p, q)$ with $1/p+1/q=1$. 
Then the right hand side is minus of the gradient of K-energy along the weak geodesic ray. The definition of the gradient for singular $\varphi_t$ is not so clear but if it was well-defined, it should be increasing with respect to $t$. Moreover the limit should be smaller just as much as the multiplicity of the central fiber than minus of the Donaldson--Futaki invariant. (See also \cite{PT06-1}, \cite{PT06-2} and \cite{PRS08}.) Assuming these points we have 
\begin{equation}\label{convexity}
\int_X (\dot{\varphi}_0-F_0)(S_{\varphi}-\hat{S}) \frac{\MA(\varphi)}{n!} 
\geq F_1. 
\end{equation}
Notice that (\ref{convexity}) implies (\ref{lower bound of the Calabi functional}) for {\em any} $1 \leq p\leq +\infty$.  
One of the proof of (\ref{convexity}) following the above line will be given in the preprint \cite{BHWN12}. 
In fact in the Fano case, we may replace the K-energy to the Ding functional to obtain the corresponding result. Convexity of the Ding functional along any weak geodesic ray was established in \cite{Bern11} and the relation between the gradient of the Ding functional and $F_1$ was shown in \cite{Berm12}. As a corollary of these results  we obtain 
\begin{equation}
\int_X (\dot{\varphi}_0-F_0)\big(e^{-\varphi} - \frac{\MA(\varphi)}{n!}\big) 
\geq F_1 
\end{equation}
with some appropriate normalization for $\varphi$. and then (\ref{Holder}) yields 
\begin{equation}\label{lower bound of the Calabi functional 2}
\norm{\mathcal{T}}_p\norm{\frac{n!e^{-\varphi}}{\MA(\varphi)}-1}_{L^q}
\geq F_1 
\end{equation}
for any $1\leq p \leq +\infty$. 
This can be seen as the analogue of the Donaldson's result in the Fano case. 

Finally we remark that the strong K-stability condition (\ref{strong K-stability}) follows from the analytic condition: 
\begin{equation}
\int_X (\dot{\varphi}_0-F_0)(S_{\varphi_t}-\hat{S}) \frac{\MA(\varphi_t)}{n!} \leq -\delta \norm{\dot{\varphi}_0-F_0}, 
\end{equation}
in case $S_{\varphi_t}$ is well-defined. It is interesting to ask whether this condition implies the properness of the K-energy. \\
 
{\bf Acknowledgments.}The author would like to express his gratitude 
to his advisor Professor Shigeharu Takayama for his warm encouragements, suggestions and reading the drafts. 
The author also would like to thank Professor S\'{e}bastien Boucksom 
for his indicating the relation between the authors preprint \cite{His12} and the paper of Julius Ross and David Witt-Nystr\"{o}m. The author wishes to thank Doctor David Witt-Nystr\"{o}m, Professor Robert Berman and Professor Bo Berndtsson for stimulating discussion and helpful comments during the author's stay in Gothenburg. In particular, the formulation of Theorem \ref{main2} and Theorem \ref{main3} are due to the suggestions of Doctor David Witt-Nystr\"{o}m. The proof of Theorem \ref{main2} is also indebted to his many helpful comments. Our work was carried out at several institutions including the Charmers University of Technology, the Gothenburg university, and the University of Tokyo. We gratefully acknowledge their support. The author is supported by JSPS Research Fellowships for Young Scientists (22-6742). 
 

\end{document}